\documentclass[preprint,12pt]{elsarticle}

\usepackage{amsmath,amssymb,amsthm,mathtools}
\usepackage{enumitem}
\usepackage{array}
\usepackage{microtype}
\usepackage[hidelinks]{hyperref}
\usepackage[T1]{fontenc}
\usepackage{lmodern}
\journal{Journal of Combinatorial Theory, Series B}
\biboptions{numbers,sort&compress}

\newtheorem{theorem}{Theorem}[section]
\newtheorem{lemma}[theorem]{Lemma}
\newtheorem{proposition}[theorem]{Proposition}
\newtheorem{corollary}[theorem]{Corollary}
\theoremstyle{definition}
\newtheorem{definition}[theorem]{Definition}
\newtheorem{remark}[theorem]{Remark}

\newcommand{\F}{\mathbb F}
\newcommand{\OA}{\operatorname{OA}}
\newcommand{\GL}{\operatorname{GL}}
\newcommand{\circuittag}[1]{\text{[circuit $#1$]}}

\begin{document}
\begin{frontmatter}

\title{Four-Entropic Matroids Are Quaternary}

\author[sharif]{Mohammad Hossein Kalantari}
\ead{mohammadhosein.kalantari@sharif.edu}
\ead{mohammadhossein.kalantari12@gmail.com}
\author[sharif]{Shahram Khazaei}
\ead{shahram.khazaei@sharif.ir}
\address[sharif]{Department of Mathematical Sciences, Sharif University of Technology, Tehran, Iran}

\begin{abstract}
For an integer $q\ge2$, a matroid is $q$-entropic if its rank function, multiplied by $\log q$, is the joint-entropy function of random variables on a $q$-element alphabet. We prove that a matroid is $4$-entropic if and only if it is representable over $\F_4$. The corresponding statements for alphabet sizes two and three were known. The proof combines minor closure and the excluded-minor characterization of quaternary matroids with structural properties of quasigroups of order four. Thus arbitrary four-symbol partition representations yield no matroids beyond the quaternary ones. As an application, every access structure admitting an ideal perfect scheme with a uniform four-symbol secret and four-symbol active shares also admits an ideal $\F_4$-linear scheme.
\end{abstract}

\begin{keyword}
fixed-alphabet entropy \sep quaternary matroid \sep partition representation \sep excluded minor \sep quasigroup \sep almost-affine code
\end{keyword}

\end{frontmatter}

\section{Introduction}\label{sec:intro}

A field representation records the dependence structure of a matroid by linear dependence of vectors. Partition representations and almost-affine codes provide a nonlinear fixed-alphabet analogue: a finite alphabet is assigned to each matroid element, and the number of possible joint coordinate values is prescribed by matroid rank. In entropy language, for an integer $q\ge2$ we call a matroid \emph{$q$-entropic} if its rank function, multiplied by $\log q$, is the joint-entropy function of random variables taking values in a common $q$-element alphabet. A matroid representable over $\F_4$ is called \emph{quaternary}.

This fixed-alphabet viewpoint appears in several equivalent forms. Mat\'u\v{s} developed partition representations and the associated matroid quasigroup equations~\cite{Matus1999}; Simonis and Ashikhmin studied almost-affine codes~\cite{SimonisAshikhmin1998}; Abbe and Spirkl investigated fixed-alphabet entropy representations~\cite{AbbeSpirkl2019}; and Chen, Cheng, and Bai related these questions to variable-strength orthogonal arrays and probabilistically characteristic sets~\cite{ChenChengBai2021,ChenChengBai2025}. We use results from these lines of work, but our question is a matroid representation problem: does an arbitrary four-symbol representation produce any matroid that is not quaternary?

Before the entropic-matroid terminology, the binary and ternary rigidity cases were already present in the ideal secret-sharing literature: Beimel and Chor proved that ideality over a binary, respectively ternary, secret domain is characterized by matroids representable over $\F_2$, respectively $\F_3$, a result later recalled by Beimel and Livne~\cite{BeimelChor1994,BeimelLivne2008}; Mat\'u\v{s} also states the equivalent degree-two and degree-three partition-representation theorem~\cite{Matus1999}. Abbe and Spirkl subsequently formulated these rigidity facts in the entropic-matroid setting~\cite{AbbeSpirkl2019}. We prove the corresponding fixed-alphabet rigidity theorem at alphabet size four; in contrast, rigidity fails at size nine, where the non-Pappus matroid has an almost-affine representation although it is representable over no field~\cite{SimonisAshikhmin1998}.

Our main result settles this case.

\begin{theorem}[Degree-four rigidity]\label{thm:main}
For every finite matroid $M$,
\[
M\text{ is $4$-entropic}
\quad\Longleftrightarrow\quad
M\text{ is representable over }\F_4.
\]
\end{theorem}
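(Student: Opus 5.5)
The easy direction comes first. If $M$ is represented over $\F_4$ by vectors $v_e\in\F_4^r$, take $X$ uniform on $\F_4^r$ and set $Y_e=\langle v_e,X\rangle$. Each $Y_e$ takes values in $\F_4$, and for every subset $A$ the tuple $(Y_e)_{e\in A}$ is uniform on a subspace of dimension $r_M(A)$. Hence $H(Y_A)=r_M(A)\log 4$, so $M$ is $4$-entropic.

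For the converse I will argue by minor closure and excluded minors. \textbf{Step 1}: show that the class of $4$-entropic matroids is closed under minors. Deletion is immediate: discard the variable. For contraction of $e$, condition on a value $Y_e=a$ of positive probability. If $e$ is not a loop, $Y_e$ is uniform on four symbols, and for every $A$ the entropy $H(Y_A\mid Y_e=a)$ equals $(r(A\cup e)-1)\log 4$. This requires the standard fact that entropic matroid representations are uniform on their supports: every $Y_A$ is uniform on $4^{r(A)}$ values and the joint distribution has the almost-affine/partition structure, so the conditional entropies are constant in $a$. This fact is known from Mat\'u\v{s} and Simonis--Ashikhmin, and I will cite it. \textbf{Step 2}: by the excluded-minor characterization of quaternary matroids (Geelen--Gerards--Kapoor), it then suffices to show that none of the seven excluded minors is $4$-entropic. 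These are $U_{2,6}$, $U_{4,6}$, $P_6$, $F_7^-$, $(F_7^-)^*$, $P_8$ and $P_8''$. Duality should halve the work: for almost-affine codes the dual of a $q$-entropic matroid is again $q$-entropic (the Simonis--Ashikhmin duality, or the orthogonal-array viewpoint). Granting this, it is enough to exclude $U_{2,6}$, $P_6$, $F_7^-$, and one of $P_8$, $P_8''$ (whichever I can reduce to), since $U_{4,6}=U_{2,6}^*$ and $P_8$ is self-dual.

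\textbf{Step 3: the small cases.} A $4$-entropic $U_{2,n}$ is exactly a set of $n-2$ mutually orthogonal Latin squares of order $4$. There are at most $3$ such squares, so $n\le 5$ and $U_{2,6}$ fails. For rank-$3$ cases ($P_6$, $F_7^-$) I will fix a basis $e_1,e_2,e_3$, identify each other element with a function $\F_4^3\to\F_4$, and read off quasigroup equations for the rank-$2$ flats. A $3$-point line gives a quasigroup of order $4$ relating two coordinates. Every quasigroup of order $4$ is isotopic to $\mathbb Z_4$ or $\mathbb Z_2^2$. Moreover, Latin squares orthogonal to another square must be isotopic to $\mathbb Z_2^2$, since $\mathbb Z_4$ has no orthogonal mate. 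So any line with at least $4$ points, or any configuration forcing orthogonality, pins the structure to the elementary abelian group. I will then show that the lines of $F_7^-$ impose associativity/Moufang-type relations on these loops (a Reidemeister-type closure argument). These relations force a linear $\F_4$ structure, in which the missing Fano line becomes dependent exactly when the characteristic is $2$, giving a contradiction. $P_6$ (rank $3$, six points, one $3$-point line) should be handled as $U_{2,6}$-like counting after contracting a point off the line, or directly via MOLS counting.

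\textbf{Main obstacle.} The hard part will be $P_8$ (or $P_8''$), which has rank $4$ and no large uniform lines. There I will choose a basis, use the rank-$3$ planes to obtain quasigroup identities, and show that all relevant quasigroups are isotopic to $\mathbb Z_2^2$ with compatible isotopies. This should force the representation to be linear over $\F_4$ up to relabelling, after which the non-quaternarity of $P_8$ gives the contradiction. I expect the case analysis of whether some line quasigroup is of $\mathbb Z_4$ type to be the delicate point. The first thing I would try is to exploit the many $4$-element circuits to force an orthogonal mate for each quasigroup, which excludes $\mathbb Z_4$ immediately.
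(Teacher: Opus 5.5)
Your easy direction, the minor closure, the Geelen--Gerards--Kapoor reduction and the MOLS exclusion of $U_{2,6}$ all match the paper. The plan for the remaining exclusions has a genuine gap.

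\textbf{The duality reduction is not available.} Step~2 assumes that the dual of a $q$-entropic matroid is $q$-entropic, and this is not an established result. Duality works for linear or abelian-group codes. For arbitrary almost-affine codes, however, there is no general dual code, and closure of fixed-degree partition representability under duality is not a known theorem. For $q=4$ it follows only \emph{a posteriori} from the theorem you are proving.

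So $U_{4,6}$ and $(F_7^-)^*$ must be excluded directly. $U_{4,6}$ is easy: an index-one $\OA(4^4,6,4,4)$ violates Bush's bound of $t+1=5$ columns when $t=q=4$. $(F_7^-)^*$ needs an argument your plan lacks. The paper contracts element $4$ to obtain $M(K_4)$ and uses Mat\'u\v{s}'s classification of its degree-four representations (the $C_4$ and $C_2\times C_2$ systems) to normalize that fibre. It then propagates this through the circuit equations of $0235$ and $1356$ to force $f=b+t_d-c$, $g=a-b-t_d$, $h=c-a+t_d$. Finally it gets $|\pi_{0126}|=|G|^3|2G|<4^4$, although $0126$ is a basis.

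Even granting duality, your count is wrong. $P_8$ and $P_8''$ are each self-dual and neither is a minor of the other, so both must be excluded. Your $F_7^-$ sketch can simply be replaced by Mat\'u\v{s}'s theorem that every partition representation of $F_7^-$ has odd degree.

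\textbf{The $P_6$ plan cannot work.} No argument through a contraction or deletion can succeed. Every proper minor of an excluded minor is quaternary, hence $4$-entropic by your own easy direction; for instance, $P_6/3\cong U_{2,5}$ is realized by three MOLS of order four. A global argument is needed. In the paper, the circuit equations for the basis $013$ give orthogonal mates to $\Gamma^2_{012}$ and to every binary retract of $\Gamma^4_{0134}$ and $\Gamma^5_{0135}$. All three are therefore $\F_2$-affine with coefficients in $\GL_2(\F_2)\cong S_3$. The circuits $0145$, $0345$, $0245$ then require $X+I$, $X+R$, $X+R^2$ all to be invertible, with $X=C^{-1}F$ and $R$ of order three, which is impossible.

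\textbf{The $P_8$ plan is incomplete.} $P_8$ has no three-point lines, so the relevant quasigroups are ternary. Ternary quasigroups of order four are not all isotopic to group-based ones, so your binary isotopy classification does not apply directly. The paper uses the cube-face circuits $0456$, $1457$, $2467$, $3567$ to produce orthogonal mates. This shows that $\Gamma^4_{1234},\Gamma^5_{0235},\Gamma^6_{0136},\Gamma^7_{0127}$ are simultaneously affine over $\F_2^2$.

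Even then, affineness yields only $\GL_2(\F_2)$ coefficients, i.e.\ an $\F_2$-multilinear structure rather than an $\F_4$-linear one. So the non-quaternarity of $P_8$ cannot yet be invoked. In the paper's normal form one must still derive $A=I+E$, $C=EB=BE$, $D=(I+E)B$, and conclude $B\in\{I,E,E^2\}$. The contradiction then comes from projection sizes onto $0347$ and $1256$, and additionally $4567$ for $P_8''$.
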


Degree-four partition representability is minor-closed. The excluded-minor theorem of Geelen, Gerards, and Kapoor therefore reduces the forward implication to seven exclusions. Published results give the two uniform exclusions and the non-Fano exclusion. Sections~\ref{sec:tools}--\ref{sec:structural} establish the remaining four, namely $P_6$, $(F_7^-)^*$, $P_8$, and $P_8''$, from Mat\'u\v{s}'s circuit-quasigroup equations.

A binary quasigroup of order four has an orthogonal mate only in the Klein isotopy class. After the alphabet is identified with $\F_2^2$, every permutation of its four points is affine. These facts turn the selected circuit equations into affine identities. For $P_8$ and $P_8''$, four cube-face relations force four ternary quasigroups to be simultaneously affine in fixed coordinates; the resulting matrix equations yield the exclusions.

\noindent\textbf{Concurrent work.}\quad
Independently and concurrently, Beimel, Ben-Efraim, Farr\`as, and Moya proved,
in the language of ideal secret sharing, the corresponding access-structure
characterization: an access structure is $4$-ideal if and only if it admits an
ideal $\F_4$-linear scheme. Their argument identifies the associated matroid
classes using different techniques, and they also settle domain size
six~\cite{BeimelBenEfraimFarrasMoya2026}.

\noindent\textbf{Paper organization.}\quad
Section~\ref{sec:fixed} develops partition representations and their
quasigroup formulation. Section~\ref{sec:reduction} gives the excluded-minor
proof framework, and Sections~\ref{sec:tools}--\ref{sec:structural} establish
the remaining exclusions. Section~\ref{sec:sss} gives the secret-sharing
consequence, and Section~\ref{sec:verify} describes supplementary independent
verification.

\section{\texorpdfstring{Preliminaries}{Preliminaries}}\label{sec:fixed}

We use standard matroid terminology. Let $M$ be a finite matroid on ground set $E$, with rank function $r$. Its dual is denoted by $M^*$, and $U_{r,n}$ denotes the rank-$r$ uniform matroid on $n$ elements. A \emph{circuit-hyperplane} is a set that is both a circuit and a hyperplane. For $S\subseteq E$, the restriction is denoted by $M|S$; deletion and contraction are denoted by $M\setminus e$ and $M/e$.

\begin{definition}
Let $\mathbb F$ be a field. An \emph{$\mathbb F$-representation}
of a matroid $M=(E,r)$ is a family of vectors $(v_e)_{e\in E}$ in a
finite-dimensional vector space over $\mathbb F$ such that, for every
$A\subseteq E$,
\[
r(A)=\dim_{\mathbb F}\operatorname{span}\{v_e:e\in A\}.
\]
A matroid admitting such a family is \emph{representable over $\mathbb F$}.
In particular, a matroid representable over $\mathbb F_4$ is called
\emph{quaternary}.
\end{definition}

For a finite random variable $X$ with distribution $p$, its Shannon entropy is
\[
H(X)=-\sum_x p(x)\log p(x),
\]
with one logarithm base fixed throughout. For a tuple $(X_e)_{e\in E}$ and $A\subseteq E$, write $X_A=(X_e:e\in A)$.

\begin{definition}
Let $q\ge2$. The matroid $M=(E,r)$ is \emph{$q$-entropic} if there are random variables $(X_e)_{e\in E}$ taking values in a common $q$-element alphabet such that
\[
H(X_A)=r(A)\log q\qquad(A\subseteq E).
\]
\end{definition}

We use Mat\'u\v{s}'s terminology~\cite[Definition~1.1]{Matus1999}.
A partition of a finite set is a family of nonempty, pairwise disjoint subsets
whose union is the whole set; its members are called \emph{blocks}. The common
refinement of two partitions consists of the nonempty intersections of their
blocks. The definition extends in the evident way to any finite family of
partitions.

\begin{definition}
A \emph{p-representation of degree $q$} of $M=(E,r)$ consists of a set $\Omega$ with $|\Omega|=q^{r(E)}$ and partitions $\xi_e$ of $\Omega$, $e\in E$, such that for every $A\subseteq E$ the common refinement of $(\xi_e)_{e\in A}$ has exactly $q^{r(A)}$ blocks, each of cardinality $q^{r(E)-r(A)}$. The set $\Omega$ is called the ground set of the p-representation.
\end{definition}

\begin{proposition}[Entropy--partition equivalence]\label{prop:entropy-partition}
For every integer $q\ge2$, a matroid is $q$-entropic if and only if it has a p-representation of degree $q$.
\end{proposition}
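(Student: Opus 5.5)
The plan is to prove both directions directly, using the uniform distribution on the ground set in one direction and the support of the joint distribution in the other.

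\emph{From p-representations to entropy.} Let $(\Omega,(\xi_e)_{e\in E})$ be a p-representation of degree $q$. Choose $\omega\in\Omega$ uniformly at random. Biject each partition $\xi_e$ into a fixed $q$-element alphabet; this is possible because $\xi_e$ has $q^{r(\{e\})}\le q$ blocks. Set $X_e$ to be the label of the block of $\xi_e$ containing $\omega$. For every $A\subseteq E$, the value of $X_A$ determines exactly one block of the common refinement of $(\xi_e)_{e\in A}$. All $q^{r(A)}$ such blocks have equal size $q^{r(E)-r(A)}$, so $X_A$ is uniform on $q^{r(A)}$ values and $H(X_A)=r(A)\log q$. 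Loops, where $r(\{e\})=0$, give constant variables, which is consistent.

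\emph{From entropy to p-representations.} Let $(X_e)$ be $q$-entropic for $M$, and let $B$ be a basis. Then $H(X_B)=r(E)\log q$, while $X_B$ takes values in a set of size $q^{|B|}=q^{r(E)}$. Hence $X_B$ is uniform on the full product of alphabets. Next, $H(X_E)=H(X_B)$, so $X_E$ is a function of $X_B$. Let $\Omega$ be the support of $X_E$. The map $\Omega\to$ (values of $X_B$) is a bijection, so $|\Omega|=q^{r(E)}$ and $X_E$ is uniform on $\Omega$.

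Define $\xi_e$ as the partition of $\Omega$ by the value of the $e$-coordinate. For each $A$, the common refinement of $(\xi_e)_{e\in A}$ is the partition by the value of $X_A$. Each block has probability equal to its size divided by $q^{r(E)}$.

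\emph{Key step: equal block sizes.} We now know that $X_A$ is a function of a uniform variable and that $H(X_A)=r(A)\log q$. We need to show that it is itself uniform on exactly $q^{r(A)}$ values. Extend a basis $I$ of $A$ to a basis $B$ of $M$. The same counting argument applied to $I$ shows that $X_I$ is uniform on $q^{|I|}$ values, that is, on the full product. Since $H(X_A)=H(X_I)$, the variable $X_A$ is a function of $X_I$. Because $X_I$ is also a function of $X_A$, the two determine each other, so $X_A$ is uniform on $q^{r(A)}$ values. Each block therefore has probability $q^{-r(A)}$ and cardinality $q^{r(E)-r(A)}$.

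This is the only nontrivial point. It depends on the fact that maximal entropy on a set of size $q^{|I|}$ forces uniformity, that is, on the equality case of $H\le\log|\text{support}|$.
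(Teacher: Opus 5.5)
Your proof is correct and is essentially the argument the paper has in mind. The paper writes no details; it only cites Chen--Cheng--Bai for ``passing between a uniformly distributed random vector and its level-set partitions.'' Your two directions fill in exactly that sketch: a uniform point of $\Omega$ labelled blockwise in one direction, and in the other the support of $X_E$ with its coordinate partitions, using the equality case of $H\le\log|\text{support}|$ first for $X_B$ and then for $X_I$ with $I$ a basis of $A$. The step of extending $I$ to a basis of $M$ is never used and can be dropped.
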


This standard equivalence follows by passing between a uniformly distributed random vector and its level-set partitions; see \cite[Section~3.1.2 and Theorem~1]{ChenChengBai2021}. The same object is an almost-affine representation in the terminology of Simonis--Ashikhmin and a variable-strength orthogonal array in the terminology of Chen--Cheng--Bai~\cite{SimonisAshikhmin1998,ChenChengBai2021}.

Putting p-representations into coordinate form makes their existence and classification easier to analyze. The next definitions introduce this framework.

\begin{definition}
Let $n$ be a positive integer, let $Q$ be a finite set, and let $\Omega\subseteq Q^n$. For each $1\le i\le n$ and $a\in Q$, define
\[
\xi_i(a)=\{x\in\Omega:x_i=a\}.
\]
The nonempty sets among the $\xi_i(a)$ form a partition of $\Omega$, called the $i$th \emph{coordinate partition}.
\end{definition}

If $\xi$ is a p-representation of a matroid $M$ on $\Omega_\xi$ and $f:\Omega_\xi\to\Omega_\eta$ is a bijection, transporting every partition through $f$ gives a p-representation $\eta$ on $\Omega_\eta$. The following notion identifies representations that differ only by such a relabelling~\cite[Definition~1.3]{Matus1999}.

\begin{definition}
	Two p-representations $\xi$ and $\eta$ of a matroid $M = (E,r)$ on ground sets $\Omega_{\xi}$ and $\Omega_{\eta}$, respectively, are \emph{p-isotopic} if 
	there exists a bijection $f: \Omega_{\xi} \rightarrow  \Omega_{\eta}$ such that
	$f\xi_i = \eta_i$ for all $i \in E$. The representations $\xi$ and $\eta$ are then said to be \emph{p-isotopic}.
\end{definition}

\begin{remark}\label{rem: transforming to subset of cartesian}
	Consider a p-representation $\xi$ of degree $q$ for a matroid $M = (E,r)$ on 
	ground set $\Omega_{\xi}$. Fix an arbitrary set $Q$ of cardinality $q$. For each $i\in E$, label the blocks of $\xi_i$ injectively by elements of $Q$, and let $f_i:\Omega_\xi\to Q$ assign to each point the label of its block. The coordinate map $f=(f_i)_{i\in E}:\Omega_\xi\to Q^E$ is injective. Considering the 
	coordinate partitions for $\Omega_{\eta} = f(\Omega_{\xi})$ gives a 
	p-representation $\eta$ of $M$ on the ground set $\Omega_{\eta} \subseteq Q^E$.
	Moreover, $\xi$ and $\eta$ are p-isotopic via $f$.
\end{remark}

The following proposition will be used repeatedly later.

\begin{proposition}\label{prop: coordinate partition p-rep}
	Assume $\xi$ and $\eta$ are p-representations of a matroid $M=(E,r)$ on the ground sets
	$\Omega_{\xi} \subseteq Q^E$ and $\Omega_{\eta} \subseteq R^E$, respectively,
	where $|Q|=|R|$ is their common degree and both representations
	have coordinate partitions. If they are p-isotopic via the map
	$f:\Omega_{\xi} \rightarrow \Omega_{\eta}$, then there is a bijection
	$f_e:Q \rightarrow R$ for each $e \in E$, and on $\Omega_\xi$
	$f = (f_e)_{e \in E}$.
	Conversely, if $\xi$ is a p-representation of the matroid
	$M=(E,r)$ on the ground set $\Omega_{\xi} \subseteq Q^E$, and $f_e:Q \rightarrow R$ is a bijection for each $e \in E$, then using $f=(f_e)_{e \in E}$ to transfer $\xi$ gives another p-representation $\eta$ on the ground set
	$\Omega_{\eta} \subseteq R^E$ with coordinate partitions. 
\end{proposition}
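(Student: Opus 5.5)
The forward direction rests on one observation: because both representations use coordinate partitions, the blocks of the $e$th partition are exactly the nonempty level sets of the $e$th coordinate function. So the isotopy $f$ must carry level sets of $x\mapsto x_e$ on $\Omega_\xi$ to level sets of $y\mapsto y_e$ on $\Omega_\eta$. That induces a well-defined map between coordinate values, and we must check that this map is defined on all of $Q$ and is a bijection onto $R$.

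First we check that every symbol actually occurs in every coordinate. Fix $e\in E$. If $r(\{e\})=1$, the partition $\xi_e$ has exactly $q$ blocks. Its blocks are the nonempty sets among $\xi_e(a)$, $a\in Q$, and $|Q|=q$, so every $\xi_e(a)$ is nonempty. If $r(\{e\})=0$ (a loop), $\xi_e$ has a single block. Then only one symbol occurs in coordinate $e$, and the statement has to be read with some choice made. In that case we take $f_e$ to send the occurring symbol of $Q$ to the occurring symbol of $R$ and extend it arbitrarily to a bijection $Q\to R$; this is possible since $|Q|=|R|$. The same counting argument applied to $\eta$ shows that, for a non-loop $e$, every $b\in R$ occurs in coordinate $e$ of $\Omega_\eta$.

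Next we define $f_e$ for a non-loop $e$. Let $a\in Q$. The block $\xi_e(a)$ is mapped by $f$ onto some block of $\eta_e$, since $f\xi_e=\eta_e$ means $f$ maps blocks to blocks. That block has the form $\eta_e(b)$ for a unique $b\in R$; set $f_e(a)=b$. Because $f$ is a bijection and the partitions correspond, distinct blocks go to distinct blocks, so $f_e$ is injective. As $|Q|=|R|$, it is then bijective.

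By construction, for $x\in\Omega_\xi$ with $x_e=a$ we have $x\in\xi_e(a)$, hence $f(x)\in\eta_e(f_e(a))$, that is, $f(x)_e=f_e(x_e)$. This holds for every $e$, which gives $f=(f_e)_{e\in E}$ on $\Omega_\xi$.

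For the converse, let $f=(f_e)$ with each $f_e$ a bijection. Then $f$ is injective on $Q^E$, so $\Omega_\eta=f(\Omega_\xi)$ has size $q^{r(E)}$. For each $e$ and $a\in Q$ we have $f(\xi_e(a))=\eta_e(f_e(a))$, where $\eta_e$ denotes the coordinate partition of $\Omega_\eta$. Hence transporting $\xi_e$ through $f$ yields exactly the $e$th coordinate partition of $\Omega_\eta$. Common refinements are likewise carried block-to-block with sizes preserved, because $f$ is a bijection $\Omega_\xi\to\Omega_\eta$. So $\eta$ is a p-representation with coordinate partitions, which is the general transport remark made before the definition of p-isotopy.

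The only delicate point is the loop case in the first part, where the bijection $f_e$ is not uniquely determined and must be chosen. Everything else is bookkeeping with level sets.
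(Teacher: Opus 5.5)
Your proposal is correct and follows essentially the same argument as the paper. For a nonloop you use the block counts to get a bijection of coordinate values induced by $f$. For a loop you match the single occurring symbols and extend arbitrarily. For the converse you note that a coordinatewise bijection carries common-refinement blocks to common-refinement blocks of the same size.
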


\begin{proof}
For a nonloop $e$, the coordinate partitions $\xi_e$ and
$\eta_e$ each have exactly $|Q|=|R|$ nonempty blocks. Since the p-isotopy
$f$ maps $\xi_e$-blocks bijectively to $\eta_e$-blocks, the $e$th coordinate
of $f(x)$ depends only on $x_e$ and defines a bijection $f_e:Q\to R$. If $e$
is a loop, only one symbol occurs in each $e$th coordinate; map the occurring
symbol in $Q$ to the occurring symbol in $R$ and extend this assignment
arbitrarily to a bijection $Q\to R$. In either case,
$f(x)=(f_e(x_e))_{e\in E}$ for every $x\in\Omega_\xi$.

Conversely, a coordinatewise family of bijections carries every common
refinement block to a common refinement block of the same cardinality.
Therefore all defining block counts of a p-representation are preserved.
\end{proof}

A natural problem is to determine whether a given matroid has a p-representation and, more generally, to classify all such representations up to p-isotopy. Mat\'u\v{s} resolves this problem for several matroids~\cite{Matus1999}.

To prove Theorem~\ref{thm:main}, we must exclude degree-four p-representations of several matroids. Mat\'u\v{s}'s quasigroup formulation is particularly convenient for this purpose.

\begin{definition}
	Suppose $C$ and $Q$ are non-empty finite sets. A subset $\Lambda \subseteq Q^C$
	is called a \emph{(C-)quasigroup} over $Q$, if for every $i \in C$, and every
	 $x_{C\setminus\{i\}}\in Q^{C\setminus\{i\}}$, there is a unique $y\in Q$ such that the vector whose $i$th coordinate is $y$ and whose remaining coordinates are $x_{C\setminus\{i\}}$ belongs to $\Lambda$. Thus $\Lambda$ determines a map $\Lambda^i:Q^{C\setminus\{i\}}\to Q$ by $\Lambda^i(x_{C\setminus\{i\}})=y$.
\end{definition}

\begin{definition}
	Suppose $k$ is a positive integer, and $Q$ is a non-empty finite set. A map
	$f:Q^k \rightarrow Q$ is called a \emph{quasigroup map} if fixing any $k-1$ inputs makes it a bijection of $Q$ in the remaining input.
\end{definition}

\begin{remark}
	The two definitions above are closely related. In fact, if $\Lambda$ is a quasigroup, fixing $i \in C$ gives the map $\Lambda^i$ as defined above. This map is a quasigroup map. Conversely, the graph of a quasigroup map $f:Q^k\to Q$, viewed as a subset of $Q^k\times Q$, is a quasigroup.
\end{remark}

For a matroid $M$, let $\mathcal{C}(M)$ denote its set of circuits; moreover, if $B$ is a basis of $M$ and $j\notin B$, let $\gamma(j;B)$ denote the unique circuit contained in $B\cup\{j\}$, called the fundamental circuit of $j$ with respect to $B$.

The following definition, which is equivalent to p-representability in the sense made precise below, is due to Mat\'u\v{s}~\cite[Definition~2.1]{Matus1999}.

\begin{definition}[Matroid quasigroup equations]\label{def:matroid-quasigroup-equations}
	Let $M=(E,r)$ be a matroid and let $Q$ be a nonempty set. A family
	$(\Gamma_C)_{C\in\mathcal C(M)}$ of $C$-quasigroups on $Q$ is a
	\emph{solution of the matroid quasigroup equations of $M$} if the following
	holds. For every basis $B$, every $i\in E\setminus B$, every circuit $C$
	containing $i$, and every $x_B\in Q^B$,
	\begin{equation}\label{eq:matroid-quasigroup-equations}
		\Gamma_{\gamma(i;B)}^i
		\bigl(x_{\gamma(i;B)\setminus\{i\}}\bigr)
		=
		\Gamma_C^i\bigl((y_j)_{j\in C\setminus\{i\}}\bigr),
	\end{equation}
	where
	\[
	y_j=
	\begin{cases}
		x_j,&j\in C\cap B,\\
		\Gamma_{\gamma(j;B)}^j
		\bigl(x_{\gamma(j;B)\setminus\{j\}}\bigr),
		&j\in C\setminus(B\cup\{i\}).
	\end{cases}
	\]
	Each instance of \eqref{eq:matroid-quasigroup-equations} is called a
	\emph{matroid quasigroup equation}.
\end{definition}

Mat\'u\v{s} also introduced the following equivalence relation on solutions of the matroid quasigroup equations~\cite[Definition~2.3]{Matus1999}.

\begin{definition}[Simultaneous isotopy]\label{def:simultaneous-isotopy}
	Let $(\Gamma_C)_{C\in\mathcal C(M)}$ and
	$(\Lambda_C)_{C\in\mathcal C(M)}$ be families of circuit quasigroups on
	alphabets $Q$ and $R$, respectively. They are \emph{simultaneously isotopic}
	if there are bijections $h_e:Q\to R$, one for each $e\in E$, such that, for
	every circuit $C$ and every $x_C\in Q^C$,
	\[
	x_C\in\Gamma_C
	\quad\Longleftrightarrow\quad
	\bigl(h_j(x_j)\bigr)_{j\in C}\in\Lambda_C.
	\]
\end{definition}

P-representations and solutions of the matroid quasigroup equations are equivalent in the following precise sense~\cite[Proposition~2.4]{Matus1999}.

\begin{proposition}[Mat\'u\v{s}'s correspondence]\label{prop:matus-correspondence}
	Let $q\ge2$. A matroid is p-representable of degree $q$ if and only
	if its matroid quasigroup equations have a solution on a $q$-element
	set. Moreover, there is a one-to-one correspondence between the p-isotopy
	classes of the p-representations of the matroid and the simultaneous 
	isotopy classes of the solutions of the matroid quasigroup equations.
\end{proposition}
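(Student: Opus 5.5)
The plan is to build explicit constructions in both directions and then check that they are mutually inverse up to the stated equivalences. By Remark~\ref{rem: transforming to subset of cartesian} and Proposition~\ref{prop: coordinate partition p-rep}, we may assume without loss of generality that every p-representation lives on a ground set $\Omega\subseteq Q^E$ with coordinate partitions. We may also assume that p-isotopies are coordinatewise bijections $(h_e)_{e\in E}$, which is exactly the data appearing in Definition~\ref{def:simultaneous-isotopy}.

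\emph{From a p-representation to a solution.} Take $\Omega\subseteq Q^E$ representing $M$. For a circuit $C$, set $\Gamma_C$ to be the projection of $\Omega$ onto $Q^C$. Because $r(C)=|C|-1=r(C\setminus\{i\})$, the refinement over $C\setminus\{i\}$ already has $q^{|C|-1}$ blocks of equal size. These blocks are determined by the values $x_{C\setminus\{i\}}$, and there are only $q^{|C|-1}$ possible such tuples, so each tuple occurs. Refining by $\xi_i$ does not increase the block count, so $x_i$ is a function of $x_{C\setminus\{i\}}$. Hence $\Gamma_C$ is a $C$-quasigroup.

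Next, for a basis $B$, the projection $\Omega\to Q^B$ is a bijection, since $r(B)=r(E)$ and there are $q^{r(B)}$ blocks, each of size $1$. Each coordinate $x_j$ with $j\notin B$ is a function of $x_B$, and this function is exactly $\Gamma^j_{\gamma(j;B)}$ applied to $x_{\gamma(j;B)\setminus\{j\}}$, because the fundamental circuit lies in $B\cup\{j\}$. Equation~\eqref{eq:matroid-quasigroup-equations} then says that the two ways of computing $x_i$ for the point of $\Omega$ over $x_B$ agree: one through $\gamma(i;B)$, the other through $C$ with the intermediate values $y_j$. Both equal the actual coordinate $x_i$, so the equation holds.

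\emph{From a solution to a p-representation.} This direction is the main obstacle. Fix one basis $B_0$ and define $\Omega$ as the set of points $x\in Q^E$ with $x_{B_0}$ arbitrary and $x_j=\Gamma^j_{\gamma(j;B_0)}(x_{\gamma(j;B_0)\setminus\{j\}})$ for $j\notin B_0$, so that $|\Omega|=q^{r(E)}$. Two facts must then be shown.

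\begin{enumerate}[label=(\roman*)]
\item Every projection of $\Omega$ onto a circuit $C$ lies in $\Gamma_C$. This is exactly \eqref{eq:matroid-quasigroup-equations} with $B=B_0$.
\item For every basis $B$, the projection $\Omega\to Q^B$ is a bijection. I plan to prove this by basis exchange. If $B'=B\setminus\{b\}\cup\{i\}$ with $b\in\gamma(i;B)$, then, knowing $x_{B'}$, we recover $x_b$ from $\gamma(i;B)\subseteq B\cup\{i\}=B'\cup\{b\}$, using the quasigroup property in coordinate $b$ together with fact~(i). Since the basis graph is connected, bijectivity propagates from $B_0$ to every basis.
\end{enumerate}

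For an arbitrary $A$, choose a basis $B\supseteq I$, where $I$ is a maximal independent subset of $A$. Then $x_A$ is determined by $x_I$, because each $a\in A\setminus I$ lies in a circuit contained in $I\cup\{a\}$. Moreover, $x_I$ ranges uniformly over $Q^I$, since it is a coordinate projection of the bijection onto $Q^B$. So the refinement over $A$ has $q^{r(A)}$ blocks, each of size $q^{r(E)-r(A)}$.

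It remains to check that the solution recovered from this $\Omega$ is the original family $\Gamma$. By fact~(i) each projection satisfies $\pi_C(\Omega)\subseteq\Gamma_C$. Both sides have $q^{|C|-1}$ elements: the right because it is a quasigroup, the left because $\pi_C(\Omega)$ is a quasigroup by the first part of the argument. So the two coincide.

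\emph{The bijection between classes.} Conversely, starting from a representation $\Omega$ and taking its solution, the construction above returns exactly $\Omega$, since $\Omega$ is determined by its $B_0$-coordinates and the fundamental-circuit maps. Coordinatewise bijections $(h_e)$ transport representations and solutions compatibly, so the correspondence descends to a bijection between p-isotopy classes and simultaneous isotopy classes.
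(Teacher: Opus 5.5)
Your proof is correct. The paper gives no proof of this proposition. It imports the statement from Mat\'u\v{s}'s Proposition~2.4 and restates only the easy half as Remark~\ref{rem:isotopy-invariance}. So you are not retracing the paper's route; you are supplying a self-contained one.

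\emph{Forward direction.} You show that the projections $\pi_C(\Omega)$ are $C$-quasigroups because $r(C)=r(C\setminus\{i\})=|C|-1$. The equations then hold because every basis projection is a bijection.

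\emph{Converse.} You build $\Omega$ from a single basis $B_0$. The $B_0$-instances of the equations give $\pi_C(\Omega)\subseteq\Gamma_C$. When writing this up, say explicitly that you pick $i\in C\setminus B_0$, which exists since $C$ is dependent. You then propagate bijectivity of $\Omega\to Q^B$ along single-element exchanges $B\to B\setminus\{b\}\cup\{i\}$, using the quasigroup property of $\Gamma_{\gamma(i;B)}$ in coordinate $b$ and $|\Omega|=|Q^{B'}|$. The rank counts then come from a maximal independent subset $I\subseteq A$.

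\emph{A by-product.} The converse uses the equations only for the single basis $B_0$. Combined with the forward direction, your argument therefore shows that the equations for one basis already imply them for all bases. This is slightly more than the statement asks.

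\emph{Class-level correspondence.} This follows because, by Proposition~\ref{prop: coordinate partition p-rep}, both kinds of classes are orbits of the same group of coordinatewise bijections. Your two constructions are mutually inverse and equivariant, since $\pi_C(h(\Omega))=h_C(\pi_C(\Omega))$. It would be worth stating that identity explicitly rather than saying the correspondence ``descends.''

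The citation buys the paper brevity. Your route buys independence from the external reference for a correspondence that the $P_6$, $(F_7^-)^*$, $P_8$, and $P_8''$ exclusions invoke repeatedly.
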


\begin{remark}\label{rem:isotopy-invariance}
For a set $C\subseteq E$, let $\pi_C:Q^E\to Q^C$ denote the canonical coordinate projection, defined by
\[
\pi_C\bigl((x_i)_{i\in E}\bigr)=(x_i)_{i\in C}.
\]
Every simultaneous isotope of a solution is again a solution. Moreover, if $\xi$ is a p-representation of degree $q$ for a matroid $M=(E,r)$ on a ground set $\Omega\subseteq Q^E$ with coordinate partitions, then $\Gamma_C=\pi_C(\Omega)$ for $C\in\mathcal C(M)$ gives a family of quasigroups $(\Gamma_C)_{C\in\mathcal C(M)}$ that solves the matroid quasigroup equations of $M$.
\end{remark}

\begin{lemma}[Minor closure]\label{lem:minor}
If $M$ has a partition representation of degree $q$, then every minor of $M$ has a partition representation of degree $q$.
\end{lemma}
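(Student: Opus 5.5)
It suffices to treat single-element deletion and single-element contraction, since every minor arises from a finite sequence of these. Throughout, fix a p-representation $(\xi_e)_{e\in E}$ of degree $q$ of $M=(E,r)$ on a ground set $\Omega$ with $|\Omega|=q^{r(E)}$, and fix an element $e\in E$.

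\textbf{Deletion.} Set $M'=M\setminus e$ on $E'=E\setminus\{e\}$, and let $r'$ be its rank function, so $r'(A)=r(A)$ for $A\subseteq E'$. If $r(E')=r(E)$, that is, if $e$ is not a coloop, then $(\xi_f)_{f\in E'}$ on the same $\Omega$ already meets every block-count and block-size condition. If $e$ is a coloop, then $r(E')=r(E)-1$ and $|\Omega|$ is too large by a factor $q$, so the ground set must shrink. We restrict to a single block $\Omega_0$ of $\xi_e$, which has $q^{r(E)-1}$ points. For $A\subseteq E'$, coloopness gives $r(A\cup\{e\})=r(A)+1$. Hence the refinement of $(\xi_f)_{f\in A\cup\{e\}}$ has $q^{r(A)+1}$ blocks, each of size $q^{r(E)-r(A)-1}$. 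Every block of the refinement of $(\xi_f)_{f\in A}$ has $q^{r(E)-r(A)}$ points, so it splits into exactly $q$ nonempty pieces by $\xi_e$. Each such block therefore meets $\Omega_0$ in exactly $q^{r(E)-r(A)-1}=q^{r'(E')-r'(A)}$ points. Consequently the traces on $\Omega_0$ have $q^{r(A)}$ blocks of the correct size, which is what is required.

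\textbf{Contraction.} Set $M'=M/e$, so $r'(A)=r(A\cup\{e\})-r(\{e\})$ for $A\subseteq E'$. If $e$ is a loop, then $\xi_e$ is the trivial one-block partition and we simply drop it. Otherwise, take $\Omega_0$ to be a block of $\xi_e$, of size $q^{r(E)-1}=q^{r'(E')}$, and use the traces of $\xi_f$ for $f\in E'$. For $A\subseteq E'$, the refinement of $(\xi_f)_{f\in A}$ restricted to $\Omega_0$ coincides with the refinement of $(\xi_f)_{f\in A\cup\{e\}}$ restricted to $\Omega_0$. The blocks of the latter lying inside $\Omega_0$ each have size $q^{r(E)-r(A\cup\{e\})}=q^{r'(E')-r'(A)}$. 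Since these blocks partition $\Omega_0$, there are $q^{r(E)-1}/q^{r(E)-r(A\cup\{e\})}=q^{r'(A)}$ of them. This gives the required counts and sizes.

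The only step needing care is the coloop case of deletion. A coloop can equally be handled as a contraction, since $M\setminus e=M/e$ when $e$ is a coloop, so this case is in fact covered by the contraction argument. All remaining checks are routine counting arguments.
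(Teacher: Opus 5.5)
Your proof is correct and follows the paper's route. Deletion discards $\xi_e$; contraction of a nonloop restricts every partition to one block of $\xi_e$; and a loop is handled as a deletion. Your separate treatment of deleting a coloop is more careful than the paper's one-line deletion step: there $|\Omega|=q^{r(E)}$ exceeds $q^{r(E\setminus\{e\})}$, so one must pass to a block of $\xi_e$ (equivalently use $M\setminus e=M/e$), a point the paper leaves implicit.
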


\begin{proof}
{For fixed-alphabet entropic representations, minor closure is also proved by
Abbe and Spirkl~\cite[Lemma~2]{AbbeSpirkl2019}. We include the partition proof
to keep the degree $q$ explicit.}
Deletion discards a coordinate partition. For contraction, let $e$ be a nonloop and choose one block $\Omega_0$ of $\xi_e$. Then $|\Omega_0|=q^{r(E)-1}$. Restrict each remaining partition to $\Omega_0$. For $A\subseteq E\setminus\{e\}$, the restricted common refinement has
\[
q^{r(A\cup\{e\})-1}=q^{r_{M/e}(A)}
\]
equal-sized blocks, using $r_{M/e}(A)=r(A\cup\{e\})-1$. Thus the restrictions represent $M/e$. If $e$ is a loop, contraction equals deletion.
\end{proof}

\section{Excluded-minor proof of the main theorem}\label{sec:reduction}

We write $F_7^-$ for the non-Fano matroid and $(F_7^-)^*$ for its dual. We use the following theorem of Geelen, Gerards, and Kapoor~\cite{GeelenGerardsKapoor2000}.

\begin{theorem}[Geelen--Gerards--Kapoor]\label{thm:ggk}
A matroid is representable over $\F_4$ if and only if it has no minor isomorphic to any of
\[
U_{2,6},\quad U_{4,6},\quad P_6,\quad F_7^-,\quad (F_7^-)^*,\quad P_8,\quad P_8''.
\]
\end{theorem}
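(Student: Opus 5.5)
Since this is the excluded-minor theorem of Geelen, Gerards, and Kapoor, which the paper cites and uses only as a black box, here is how I would reprove it. I would split it into an easy direction and a hard one.

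\emph{Easy direction.} Each of the seven listed matroids must be shown to be non-quaternary, and to have every single-element deletion and contraction quaternary. This is a finite check.
\begin{itemize}
\item $U_{2,6}$ needs six pairwise independent points on a line, while $PG(1,4)$ has only five points. $U_{4,6}=U_{2,6}^*$ is handled by duality.
\item $F_7^-$ and $(F_7^-)^*$ fail in characteristic two: the three diagonal points of a quadrangle are collinear in characteristic two, but in $F_7^-$ they are not.
\item $P_6$, $P_8$ and $P_8''$ fail by a direct coordinate computation. Fix a projective frame and solve for the remaining entries over $\F_4$; this reaches a contradiction.
\end{itemize}
Minimality is likewise a finite check, by exhibiting explicit $\F_4$-matrices for the single-element minors. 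Since a minor of a quaternary matroid is quaternary, none of the seven listed matroids can occur as a minor of a quaternary one. This gives one implication.

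\emph{Hard direction.} Let $M$ be an excluded minor for $\F_4$-representability. Standard arguments show $M$ is $3$-connected, since representability is preserved under direct sums and $2$-sums. If $M$ has no $U_{2,4}$-minor it is binary, hence quaternary; so $M$ has a $U_{2,4}$-minor. The key tool is Kahn's theorem: a $3$-connected quaternary matroid has, up to projective equivalence (row operations, column scaling, and field automorphism), a unique $\F_4$-representation. I would exploit this uniqueness as follows.
\begin{enumerate}
\item Choose distinct elements $x,y$ such that $M\setminus x$, $M\setminus y$, $M\setminus\{x,y\}$ (or the appropriate contraction analogues) are all $3$-connected. Such a pair exists by Seymour's splitter theorem and Bixby's lemma, possibly after replacing $M$ by $M^*$, as long as $M$ is not too small.
\item Take representations of $M\setminus x$ and $M\setminus y$. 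Uniqueness forces them to agree on $M\setminus\{x,y\}$ after normalisation.
\item Glue them into a candidate matrix $A$ for $M$. Since $M$ is not quaternary, $A$ must represent some other matroid, so some set $Z$ containing both $x$ and $y$ is dependent in exactly one of $M$ and $M[A]$.
\item Use a blocking-sequence argument (Geelen's technique) to find such a discrepancy set $Z$ of bounded size inside a bounded-size minor that still contains $x$, $y$ and a $U_{2,4}$-minor. This shows $M$ itself has bounded size, roughly at most eight to ten elements.
\end{enumerate}

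\emph{Finishing.} A finite enumeration of $3$-connected non-quaternary matroids of that size, all of whose single-element minors are quaternary, should yield exactly the seven listed matroids.

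\emph{Expected obstacle.} The main difficulty is the bounding step. Connectivity of $M\setminus\{x,y\}$ may fail, and one then needs a chain-theorem or fan argument. Making the blocking sequence short requires careful control of how the unique representation behaves under pivoting. I would first try to force a pair $x,y$ with $\{x,y\}$ in a triangle or triad, to localise the discrepancy.
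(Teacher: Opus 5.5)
\textbf{There is a genuine gap: the hard direction is an outline, not a proof.} The paper gives no proof of this theorem. It quotes it from Geelen, Gerards, and Kapoor and uses it purely as a black box, so here the appropriate justification is the citation.

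\emph{Easy direction.} This is essentially right. For the stated equivalence you only need each of the seven matroids to be non-quaternary; minimality matters only for irredundancy of the list. The $U_{2,6}$, $F_7^-$ and duality arguments are fine. $P_6$ is a one-line check: in $PG(2,4)$ the line through $0,1,2$ has five points. Three of them are its pairwise distinct intersections with the lines $34$, $35$, $45$, and none of these can be $0$, $1$ or $2$. That leaves room for only two of $0,1,2$. For $P_8$ and $P_8''$ you still owe the actual computation.

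\emph{Hard direction, Step~4 and the finishing step.} The gap sits exactly where all the difficulty lies. Step~4 asserts without argument that the discrepancy can be localised by a short blocking sequence, bounding $|E(M)|$ by ``roughly eight to ten''. Nothing in your sketch explains why the sequence should be short. Establishing such control is the real content of the theorem, and the original paper needs a long argument for it. Without it, the final ``finite enumeration'' has no established range. It is also not carried out: ``should yield'' is not a verification.

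\emph{Step~1.} This is also unsupported as stated. Bixby's lemma is a single-element statement, valid only up to series and parallel classes. Together with the splitter theorem it does not produce $x,y$ with $M\setminus x$, $M\setminus y$ and $M\setminus\{x,y\}$ simultaneously $3$-connected. Fan-like structure obstructs this and needs a separate argument.

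Moreover, a mixed ``contraction analogue'' (one deletion, one contraction) does not glue as in Step~2. A column of a representation of $M/y$ lifts to $M$ only up to adding an unknown multiple of $v_y$. So uniqueness of the representation of $M\setminus x/y$ no longer determines a single candidate matrix.

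\emph{Steps~2--3.} These are sound in the pure deletion case. With $M\setminus\{x,y\}$ $3$-connected and of full rank, Kahn's theorem lets you glue. Then $M[A]\setminus x=M\setminus x$ and $M[A]\setminus y=M\setminus y$ force every discrepancy set to contain both $x$ and $y$. But without a proof of the bounding step and of Step~1, the argument does not establish the theorem. In the context of this paper you should simply cite it.
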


The reverse implication of Theorem~\ref{thm:main} is standard: evaluating a
uniformly random linear functional on an $\F_4$-representation gives the
required entropic representation. For the forward implication, suppose that
$M$ is $4$-entropic. By Proposition~\ref{prop:entropy-partition}, it has a
partition representation of degree four, and Lemma~\ref{lem:minor} gives such
a representation for every minor. Lemma~\ref{lem:classical} and
Theorems~\ref{lem:p6}, \ref{lem:dnf}, and \ref{lem:p8} exclude all seven
minors in Theorem~\ref{thm:ggk}. Hence $M$ is quaternary, proving
Theorem~\ref{thm:main}.

Table~\ref{tab:matroids} summarizes the excluded minors and the auxiliary
graphic matroid used in the proof. The labelled copies needed for the
structural arguments are specified immediately below and in
Lemma~\ref{lem:matroid isomorphism}.

\begin{table}[ht]
\centering
\caption{Matroids used in the excluded-minor reduction and its proof.}
\label{tab:matroids}
\small
\begin{tabular}{@{}lcc>{\raggedright\arraybackslash}p{0.55\textwidth}@{}}
\hline
Matroid & Rank & Size & Role in the proof \\
\hline
$U_{2,6}$ & $2$ & $6$ & Excluded by the entropy bound in Lemma~\ref{lem:classical}. \\
$U_{4,6}$ & $4$ & $6$ & Excluded by the orthogonal-array bound in Lemma~\ref{lem:classical}. \\
$F_7^-$ & $3$ & $7$ & Excluded by the parity result in Lemma~\ref{lem:classical}. \\
$P_6$ & $3$ & $6$ & Excluded in Theorem~\ref{lem:p6}. \\
$(F_7^-)^*$ & $4$ & $7$ & Excluded in Theorem~\ref{lem:dnf}. \\
$P_8$ & $4$ & $8$ & Excluded in Theorem~\ref{lem:p8}. \\
$P_8''$ & $4$ & $8$ & Excluded in Theorem~\ref{lem:p8}. \\
$M(K_4)$ & $3$ & $6$ & Auxiliary contraction minor identified with $(F_7^-)^*/4$ in Lemma~\ref{lem:matroid isomorphism}. \\
\hline
\end{tabular}
\end{table}

For the structural exclusions we fix labelled copies. We abbreviate $\{0,1,2\}$ by $012$, and similarly for other sets. The rank-three matroid $P_6$ has ground set $\{0,\ldots,5\}$, with $012$ its unique three-element circuit; every other triple is a basis. We take $(F_7^-)^*$ to be the rank-four matroid on $\{0,\ldots,6\}$ whose nonbasis four-sets are the circuit-hyperplanes
\begin{equation}\label{eq:dnf}
0145,\quad0235,\quad0346,\quad1234,\quad1356,\quad2456.
\end{equation}
We take $P_8$ to be the rank-four matroid on $\{0,\ldots,7\}$ whose nonbasis four-sets are
\begin{equation}\label{eq:p8}
\begin{split}
0127,&\ 0136,\ 0235,\ 1234,\ 0456,\\
1457,&\ 2467,\ 3567,\ 0347,\ 1256.
\end{split}
\end{equation}
The last two form the unique disjoint pair of circuit-hyperplanes. The matroid $P_8''$ is obtained by \emph{relaxing} them, meaning that these two sets are also declared to be bases; hence its circuit-hyperplanes are the first eight sets in \eqref{eq:p8}.

Three exclusions are already available in the literature.

\begin{lemma}\label{lem:classical}
None of $U_{2,6}$, $U_{4,6}$, or $F_7^-$ has a partition representation of degree four.
\end{lemma}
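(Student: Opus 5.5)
We treat the three matroids separately; the table already names the tool for each.

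For $U_{2,6}$ we use entropy counting. A degree-four p-representation of a rank-two matroid has $|\Omega|=16$. Every pair of elements is a basis, so any two coordinate partitions have common refinement equal to the partition of $\Omega$ into singletons. Fix one element $a$. For each of the other five elements $b$, the partition $\xi_b$ restricted to a block of $\xi_a$ (a set of four points) separates those points. Hence each $\xi_b$ gives a Latin-square-like structure relative to $\xi_a$ and a second element $c$. Concretely, write $\Omega\cong Q^2$ using the coordinates $a,c$. Then each remaining element $b$ is a quasigroup map $Q^2\to Q$, and distinct remaining elements $b,b'$ must be orthogonal, since $\{b,b'\}$ is a basis. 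So the four elements other than $a,c$ give four mutually orthogonal Latin squares of order four. At most $q-1=3$ MOLS of order four exist, which is the required contradiction. This is the classical bound that rank two with a $q$-element alphabet allows at most $q+1$ points, and $6>5$.

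For $U_{4,6}$ we dualise the same bound. A p-representation of $U_{4,6}$ of degree four is an orthogonal array $\OA(4,6,4)$ of strength four and index one, i.e.\ an MDS code of length six, size $4^4$ and minimum distance three over a four-letter alphabet. We would invoke the Bush bound for orthogonal arrays of index one: if $t\ge 2$ and $q\le t$ (here $t=q=4$), then the number of factors $k$ is at most $t+1=5$. Hence six columns are impossible. Alternatively, contract two elements using Lemma~\ref{lem:minor} to get $U_{2,4}$, but that is representable, so we instead cite the Bush bound directly. The key observation is that $q\le t$ forces $k\le t+1$.

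For $F_7^-$ we use the parity argument of Mat\'u\v{s} and of Simonis--Ashikhmin: the non-Fano matroid has p-representations only in odd degree. Since the paper permits citing the literature, we would cite \cite{Matus1999}, which proves that $F_7^-$ is p-representable of degree $q$ only if $q$ is odd.

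If we had to argue this ourselves, we would use the quasigroup equations from Definition~\ref{def:matroid-quasigroup-equations}. Take a basis $\{0,1,2\}$ and express the remaining four points as quasigroup maps. The six lines of $F_7^-$ then give equations showing that a certain Latin square of order $q$ has a transversal-type involution without a fixed point pairing. This forces $q$ odd. We expect this parity derivation to be the main obstacle if done from scratch, so we rely on the citation.
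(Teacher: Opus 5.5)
Your proof is correct. For $U_{4,6}$ and $F_7^-$ it is the paper's argument: the paper also identifies a degree-four representation of $U_{4,6}$ with an index-one $\OA(4^4,6,4,4)$ and applies the $t\ge q$ case of Bush's bound. It likewise cites Mat\'u\v{s}'s proof of \cite[Proposition~4.1]{Matus1999} for the odd-degree property of $F_7^-$.

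The real difference is $U_{2,6}$. The paper cites Abbe--Spirkl's lemma that $U_{2,q+2}$ is not $q$-entropic and transfers it through Proposition~\ref{prop:entropy-partition}. You instead work directly with the partitions:
\begin{itemize}
\item the coordinates $a,c$ identify $\Omega$ with $Q^2$;
\item each of the remaining four elements is a Latin square, because $\{a,b\}$ and $\{c,b\}$ are bases;
\item any two of them are orthogonal, because $\{b,b'\}$ is a basis.
\end{itemize}
This gives four mutually orthogonal Latin squares of order four, against the classical bound of $q-1=3$. Every step checks out. Your argument is self-contained, avoids the entropy translation, and proves the general $U_{2,q+2}$ statement verbatim. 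It also matches the orthogonal-mate reasoning the paper uses later (Lemma~\ref{lem:orthogonal4} and the $P_6$ exclusion). The paper's citation is simply shorter.

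A few cosmetic points:
\begin{itemize}
\item What you call ``entropy counting'' is really the MOLS bound.
\item ``Dualise the same bound'' should not be read as an argument, since nothing here shows that degree-$q$ p-representability is closed under duality. It is Bush's bound that does the work for $U_{4,6}$, as you in fact use.
\item Your from-scratch sketch for $F_7^-$ is too vague to count as a proof, and the attribution to Simonis--Ashikhmin is unnecessary. Since you rely on the citation to Mat\'u\v{s}, exactly as the paper does, this is harmless.
\end{itemize}
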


\begin{proof}
Abbe and Spirkl prove that $U_{2,q+2}$ is not $q$-entropic for every $q\ge2$~\cite[Lemma~11]{AbbeSpirkl2019}; Proposition~\ref{prop:entropy-partition} gives the $U_{2,6}$ assertion. A degree-four representation of $U_{4,6}$ would be an index-one orthogonal array $\OA(4^4,6,4,4)$. The $t\ge q$ case of Bush's bound gives at most $t+1=5$ columns when $t=q=4$~\cite{Bush1952}. Finally, in the proof of \cite[Proposition~4.1]{Matus1999}, Mat\'u\v{s} shows that every partition representation of the non-Fano matroid $F_7^-$ has odd degree.
\end{proof}

It remains to establish the four degree-four exclusions for $P_6$,
$(F_7^-)^*$, $P_8$, and $P_8''$ invoked above.

\section{\texorpdfstring{Preliminary lemmas}{Preliminary lemmas}}\label{sec:tools}

Put $V = \F_2^2$. This is a set of cardinality $4$. Moreover, it is a vector
space of dimension $2$ over $\F_2$, the $2$-element field. Consider the elements of $V$ as row vectors. A quasigroup map
$f:V^k \rightarrow V$ is called an \emph{affine}  quasigroup map on $V$ if it has the form
\begin{equation}\label{eq:affine}
q(x_1,\ldots,x_k)=x_1A_1+\cdots+x_kA_k+c,
\qquad A_i\in\GL_2(\F_2),\quad c\in V.
\end{equation}

Two binary quasigroups $q,r:V^2\to V$ are \emph{orthogonal} if $(x,y)\mapsto(q(x,y),r(x,y))$ is a bijection of $V^2$. In that case, each is called an orthogonal mate of the other.

\begin{lemma}[Order-four orthogonality]\label{lem:orthogonal4}
If $f$ is a binary quasigroup on $V = \F_2^2$ which has an orthogonal mate, then
it is an affine quasigroup on $V$, i.e., there are $A,B \in \GL_2(\F_2)$, and 
$c \in V$ such that for all $x,y \in V$ 
\[
f(x,y)=xA+yB+c.
\]
\end{lemma}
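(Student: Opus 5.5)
We would prove Lemma~\ref{lem:orthogonal4} by classifying Latin squares of order four up to isotopy and then absorbing the isotopy permutations into affine maps. There are exactly two isotopy classes of quasigroups of order four: the Klein class, isotopic to addition in $V=\F_2^2$, and the cyclic class, isotopic to $\mathbb Z_4$. We verify this directly. Normalize $f$ by an isotopy so that its first row and first column are in natural order. A short case analysis on the $2\times2$ subsquare structure then leaves only four reduced Latin squares of order four: one is the Cayley table of $V$, and the other three are isotopic to $\mathbb Z_4$.

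The next step is to show that no member of the cyclic class has an orthogonal mate. Orthogonality is preserved when the same isotopy is applied to the inputs of both squares, and permuting the output symbols of either square also preserves it. So it suffices to treat $f(x,y)=x+y$ on $\mathbb Z_4$. Here we invoke the classical fact, due to Euler for cyclic groups of even order, that such a table has no transversal. The argument is short. A transversal is a permutation $\sigma$ such that $i\mapsto i+\sigma(i)$ is also a permutation. Summing gives
\[
\sum_i (i+\sigma(i)) = 2\sum_i i = 2\cdot 6 = 12 \equiv 0,
\]
whereas $\sum_i i = 6 \equiv 2 \pmod 4$, a contradiction. An orthogonal mate would partition the cells into four disjoint transversals, one for each symbol of the mate, so none exists. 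Hence any $f$ with an orthogonal mate lies in the Klein class. That is, there are bijections $\alpha,\beta,\gamma$ of $V$ with
\[
f(x,y)=\gamma\bigl(\alpha(x)+\beta(y)\bigr).
\]

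It remains to turn this isotopy form into the affine form. The key observation, which the paper also highlights, is that every permutation of the four-element set $V$ is affine. The affine group $\mathrm{AGL}_2(\F_2)=V\rtimes\GL_2(\F_2)$ has order $4\cdot 6=24=|S_4|$. It acts faithfully on $V$, so it is all of $S_4$. Write
\[
\alpha(x)=xA'+a,\qquad \beta(y)=yB'+b,\qquad \gamma(z)=zG+g.
\]
Then
\[
f(x,y)=xA'G+yB'G+(a+b)G+g,
\]
which is the required form with $A=A'G$, $B=B'G$, $c=(a+b)G+g$.

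The main obstacle is the isotopy classification of order-four Latin squares. We would keep it brief by citing the standard enumeration. Alternatively, we can avoid it by a direct argument: if $f$ is not isotopic to $V$, there are rows $x,x'$ whose row permutations differ by a $4$-cycle. From there one transfers the transversal-parity obstruction via the isotopy to $\mathbb Z_4$.
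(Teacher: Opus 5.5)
Your proposal is correct and follows the paper's route: first show that a quasigroup with an orthogonal mate is isotopic to addition on $V$, then absorb the isotopy permutations using the fact that every permutation of $V$ is affine, since $|\mathrm{AGL}_2(\F_2)|=24=|S_4|$. The only difference is in the first step, which the paper takes directly from the Egan--Wanless classification, whereas you prove it from the two isotopy classes of order-four Latin squares and Euler's parity argument that the $\mathbb{Z}_4$ table has no transversal (valid here because the paper's mates are themselves quasigroups, so their symbol classes are transversals).
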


\begin{proof}
By the order-four classification of Latin squares with an orthogonal
mate~\cite[Section~5]{EganWanless2016}, there are permutations
$\sigma_0,\sigma_1,\sigma_2$ of $V$ such that
\[
\sigma_0\bigl(f(x,y)\bigr)=\sigma_1(x)+\sigma_2(y).
\]
Moreover,
\[
|\operatorname{AGL}_2(\F_2)|=4|\GL_2(\F_2)|=24=|S_4|,
\]
and the affine group acts faithfully on the four points of $V$. Hence every
permutation of $V$ is affine. Writing
$\sigma_i(z)=zL_i+t_i$, where $L_i\in\GL_2(\F_2)$, and solving the displayed
identity for $f(x,y)$ gives
\[
f(x,y)=xA+yB+c
\]
for some $A,B\in\GL_2(\F_2)$ and $c\in V$.
\end{proof}

\begin{lemma}[Affine retracts]\label{lem:retract}
Let $T:V^3\to V$. If every binary retract of $T$, meaning every map obtained by fixing one input, is affine, then $T$ is affine. More precisely,
\[
T(x,y,z)=xA+yB+zC+t
\]
for some $2\times2$ matrices $A,B,C$ and $t\in V$. If $T$ is a ternary quasigroup, then $A,B,C\in\GL_2(\F_2)$.
\end{lemma}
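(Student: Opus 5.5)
Write the hypothesis explicitly. For each $z\in V$, the retract $(x,y)\mapsto T(x,y,z)$ is affine, so
\[
T(x,y,z)=xA_z+yB_z+c_z
\]
for some matrices $A_z,B_z$ and $c_z\in V$. Here "affine" is read as affine over $\F_2$, with matrices not yet assumed invertible. Similarly, fixing $x$ gives $T(x,y,z)=yB'_x+zC'_x+d_x$, and fixing $y$ gives $T(x,y,z)=xA''_y+zC''_y+e_y$. The plan is to show that $A_z$, $B_z$ are constant in $z$ and that $z\mapsto c_z$ is affine. The quasigroup conclusion is then immediate: if $T$ is a ternary quasigroup, fixing $y,z$ makes $x\mapsto xA+\text{const}$ a bijection of $V$, so $A\in\GL_2(\F_2)$; the same argument applies to $B$ and $C$.

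\textbf{Step 1: $A_z$ is constant.} Over $\F_2$, a map $g:V\to V$ is affine if and only if
\[
g(u)+g(u+a)+g(u+b)+g(u+a+b)=0
\]
for all $u,a,b$. Equivalently, all second differences $\Delta_a\Delta_b g$ vanish, and for a two-dimensional space this is the only condition. Compare coefficients. For fixed $y$, the map $x\mapsto T(x,y,z)$ has linear part $A_z$ (from the $z$-retract) and also linear part $A''_y$ (from the $y$-retract). Hence $A_z=A''_y$ for all $y,z$, so both equal a single matrix $A$. In the same way, comparing the $z$-retract with the $x$-retract gives $B_z=B'_x=:B$, and comparing the $x$-retract with the $y$-retract gives $C'_x=C''_y=:C$.

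\textbf{Step 2: the constant term.} We now have $T(x,y,z)=xA+yB+c_z$. Fixing $x=y=0$ gives $T(0,0,z)=c_z$. By the $x$-retract at $x=0$, this also equals $zC'_0+d_0=zC+d_0$. Therefore $c_z=zC+t$ with $t=d_0$, which yields $T(x,y,z)=xA+yB+zC+t$.

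The step needing most care is making rigorous that the linear part of an affine map $V\to V$ is well defined, which justifies comparing coefficients. This holds because $g(u)-g(0)$ is the linear part and is determined by $g$. With that observation, every identification of matrices above is simply an equality of the functions $x\mapsto T(x,y,z)-T(0,y,z)$, and the argument contains no genuine obstacle.
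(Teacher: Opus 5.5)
Your proof is correct and follows essentially the same route as the paper: uniqueness of the linear part of an affine self-map of $V$ forces the coefficient matrices from different families of retracts to agree, the retract at $x=0$ supplies the remaining affine dependence, and invertibility follows from the quasigroup property. The second-difference characterization you state in Step 1 is never used and can be dropped; the paper also makes only the single comparison $A_z=C_y$ before invoking the $x=0$ retract, so your extra comparisons are harmless but not needed.
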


\begin{proof}
For each $z$ write
\[
T(x,y,z)=xA_z+yB_z+c_z,
\]
and for each $y$ write
\[
T(x,y,z)=xC_y+zD_y+e_y.
\]
The affine linear part of a map on $V^2$ is unique, so $A_z=C_y$ for every $y,z$; call the common matrix $A$. Thus $T(x,y,z)-xA$ is independent of $x$. The retract $T(0,y,z)$ is affine in $(y,z)$, hence
\[
T(x,y,z)=xA+yB+zC+t.
\]
If $T$ is a quasigroup, then with the other two variables fixed, each of the maps $x\mapsto xA$, $y\mapsto yB$, and $z\mapsto zC$ must be bijective. Therefore $A,B,C$ are invertible.
\end{proof}

We label $M(K_4)$ so that its triangles are
$124$, $136$, $235$, and $456$.

\begin{lemma}\label{lem:matroid isomorphism}
	Consider the matroid $(F_7^-)^*/4$. The following map gives an isomorphism from
	$(F_7^-)^*/4$ to $M(K_4)$:
\[
0 \mapsto1,\qquad 1 \mapsto 2,\qquad 2 \mapsto 5,\qquad 3 \mapsto 3, \qquad 
5 \mapsto 4, \qquad 6 \mapsto 6.
\]
Moreover, $(F_7^-)^*/4$ has $7$ circuits 
\[
015, \qquad 036, \qquad 123, \qquad 256, \qquad 0126, \qquad 0235,
\qquad 1356.
\]
These circuits are mapped to the following circuits of $M(K_4)$, respectively,
\[
124, \qquad 136, \qquad 235, \qquad 456, \qquad 1256, \qquad 1345,
\qquad 2346.
\]
\end{lemma}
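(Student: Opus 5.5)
The plan is to compute the contraction $(F_7^-)^*/4$ directly from the list of circuit-hyperplanes in \eqref{eq:dnf}, then compare it with $M(K_4)$ under the stated relabelling. The matroid $(F_7^-)^*$ has rank four. Every four-subset not in \eqref{eq:dnf} is a basis. Every three-subset is independent, since each nonbasis four-set is a circuit and so has all its three-subsets independent. Hence $(F_7^-)^*/4$ has rank three on $\{0,1,2,3,5,6\}$. A triple $T$ is dependent in the contraction exactly when $T\cup\{4\}$ is a nonbasis of $(F_7^-)^*$, that is, when $T\cup\{4\}$ is one of the six sets in \eqref{eq:dnf}.

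The sets in \eqref{eq:dnf} containing $4$ are $0145$, $0346$, $1234$, and $2456$. Deleting $4$ from them gives the triangles $015$, $036$, $123$, $256$. In a rank-three matroid with no loops or parallel pairs, every nonbasis triple is a circuit, and every four-set containing no triangle is a circuit. Loops and parallel pairs do not occur here, because every two-set together with $4$ is independent in $(F_7^-)^*$.

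The four-element circuits are therefore the four-sets containing none of the four triangles. There are fifteen four-subsets of a six-element set. Each triangle lies in exactly three of them, and no four-set contains two triangles, because any two of the triangles share exactly one element and their union has five elements. So $15-12=3$ four-sets remain, and a direct check shows they are $0126$, $0235$, $1356$. This gives the seven circuits listed in the statement. No larger circuits exist because the rank is three.

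Finally, I apply the map $0\mapsto1$, $1\mapsto2$, $2\mapsto5$, $3\mapsto3$, $5\mapsto4$, $6\mapsto6$ to each circuit. The triangles go to $015\mapsto124$, $036\mapsto136$, $123\mapsto253=235$, and $256\mapsto546=456$, which are exactly the four triangles of the labelled $M(K_4)$. The map is a bijection, and both matroids are rank three, simple, and determined by their sets of triangles in the manner above. It therefore carries the rank function, and hence all circuits, correctly. As a consistency check, the images of the four-circuits are $0126\mapsto1256$, $0235\mapsto1345$, and $1356\mapsto2346$, as listed.

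The only step needing care is the claim that the contraction has no dependencies beyond those coming from \eqref{eq:dnf}. This follows from the description of $(F_7^-)^*$ as having exactly those six nonbases, so nothing here is a real obstacle; the work is bookkeeping.
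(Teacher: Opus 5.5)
Your proof is correct and takes the same route as the paper: you read off the triangles of the contraction from the four sets in \eqref{eq:dnf} that contain $4$, take the four-element circuits to be the triangle-free four-sets, and check that the relabelling sends these onto the circuits of the labelled $M(K_4)$. Your counting argument for the three four-element circuits, together with the remark that a simple rank-three matroid is determined by its dependent triples, spells out bookkeeping that the paper's one-line proof leaves to the reader.
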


\begin{proof}
Contracting element $4$ in the labelled matroid specified by
\eqref{eq:dnf} and retaining the minimal dependent sets gives precisely the
seven circuits displayed above. Under the stated map, they become the four
triangles and the three four-element circuits of the labelled $M(K_4)$, in
the displayed order. Hence the map is an isomorphism.
\end{proof}

\begin{lemma}\label{lem:contraction-fibre}
Consider a matroid $M=(E,r)$ and a finite set $Q$ of cardinality at least $2$. Suppose that $e \in E$ is not a loop, i.e., $r(\{e\})=1$, and $\xi$ is a p-representation of degree $|Q|$ for $M$ on the ground set $\Omega \subseteq Q^E$ 
with coordinate partitions.
Choose an arbitrary $q_0 \in Q$, and consider the following subset of $Q^{E}$
\[
\Omega_{\xi,e,q_0} =\{x \in \Omega:x_e=q_0\}.
\]
Using coordinate partitions on $\Omega_{\xi,e,q_0}$ gives a p-representation $\xi_{e,q_0}$ of degree
$|Q|$ for $M/e$.
\end{lemma}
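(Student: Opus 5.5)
This is essentially the contraction step in the proof of Lemma~\ref{lem:minor}, rewritten in coordinates. The plan is to show three things: the set $\Omega_{\xi,e,q_0}$ is exactly one block of $\xi_e$; restricting the remaining partitions to that block represents $M/e$; and the coordinate-partition description survives the restriction. We regard $\Omega_{\xi,e,q_0}$ as a subset of $Q^{E\setminus\{e\}}$ by discarding the constant $e$th coordinate. We equip it with the coordinate partitions indexed by $E\setminus\{e\}$, which is the ground set of $M/e$.

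\emph{Step 1: the set is a full block of the right size.} Since $e$ is not a loop, the refinement for $A=\{e\}$ has $|Q|^{r(\{e\})}=|Q|$ blocks. The blocks of the $e$th coordinate partition are the nonempty sets $\xi_e(a)$ with $a\in Q$, and there are at most $|Q|$ such labels. Hence every $\xi_e(a)$ is nonempty, including $a=q_0$. Each has cardinality $|Q|^{r(E)-1}=|Q|^{r_{M/e}(E\setminus\{e\})}$, using $r_{M/e}(E\setminus\{e\})=r(E)-1$. This gives the required size of the ground set.

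\emph{Step 2: block counts.} Fix $A\subseteq E\setminus\{e\}$. Each block of the common refinement of the $\xi_i$ with $i\in A\cup\{e\}$ lies inside a single block of $\xi_e$. By the p-representation property, this refinement has $|Q|^{r(A\cup\{e\})}$ blocks, each of size $|Q|^{r(E)-r(A\cup\{e\})}$.

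The blocks lying inside $\xi_e(q_0)$ are exactly the nonempty intersections with $\Omega_{\xi,e,q_0}$ of the blocks of the refinement indexed by $A$. Their number is
\[
|\Omega_{\xi,e,q_0}|\,/\,|Q|^{r(E)-r(A\cup\{e\})}=|Q|^{r(A\cup\{e\})-1}=|Q|^{r_{M/e}(A)}.
\]
Each of these blocks has size $|Q|^{r(E)-r(A\cup\{e\})}=|Q|^{(r(E)-1)-r_{M/e}(A)}$, which is the size the definition requires for $M/e$.

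\emph{Step 3: coordinate-partition bookkeeping.} The restricted partition for $i\in E\setminus\{e\}$ consists of the nonempty sets $\{x\in\Omega_{\xi,e,q_0}:x_i=a\}$. This is precisely the $i$th coordinate partition of $\Omega_{\xi,e,q_0}$, so the representation obtained in Step 2 is given by coordinate partitions.

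The only point needing care is Step 1, namely that $\xi_e(q_0)$ is nonempty for an arbitrary $q_0$. This follows from the non-loop hypothesis by the counting argument given there. The rest is the arithmetic already carried out in Lemma~\ref{lem:minor}.
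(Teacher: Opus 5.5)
Your proof is correct and is essentially the paper's argument. In both, the fibre has size $|Q|^{r(E)-1}$, and the blocks of the refinement indexed by $A\cup\{e\}$ inside it all have size $|Q|^{r(E)-r(A\cup\{e\})}$, which yields $|Q|^{r_{M/e}(A)}$ blocks. Your Step~1 is a useful addition: it uses the non-loop hypothesis to show that every label $q_0$ actually occurs in coordinate $e$, a point the paper's proof takes for granted when it states the fibre's cardinality.
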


\begin{proof}
Let $A\subseteq E\setminus\{e\}$. In the original
representation, every block of the common refinement indexed by
$A\cup\{e\}$ has cardinality $|Q|^{r(E)-r(A\cup\{e\})}$. Since the fibre
$\Omega_{\xi,e,q_0}$ has cardinality $|Q|^{r(E)-1}$, its restriction by the
coordinates in $A$ consists of
\[
|Q|^{r(A\cup\{e\})-1}=|Q|^{r_{M/e}(A)}
\]
equal-sized blocks. These are exactly the defining counts for a degree-$|Q|$
p-representation of $M/e$.
\end{proof}

\begin{lemma}\label{lem: p-isotopy of contraction}
Let $M=(E,r)$ be a matroid, and let $Q$ and $R$ be finite sets
of the same cardinality at least $2$. Suppose that $e\in E$ is not a loop and
that $\xi$ and $\eta$ are p-isotopic p-representations of $M$ on
$\Omega_\xi\subseteq Q^E$ and $\Omega_\eta\subseteq R^E$, respectively, both
with coordinate partitions. For every $q_0\in Q$, there is an $r_0\in R$ such
that the contraction-fibre representations $\xi_{e,q_0}$ and
$\eta_{e,r_0}$ of $M/e$ are p-isotopic.
\end{lemma}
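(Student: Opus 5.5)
The plan is to apply Proposition~\ref{prop: coordinate partition p-rep} to the p-isotopy $f:\Omega_\xi\to\Omega_\eta$. It shows that $f$ acts coordinatewise: there are bijections $f_i:Q\to R$, $i\in E$, with $f(x)=(f_i(x_i))_{i\in E}$ for every $x\in\Omega_\xi$. We then take $r_0=f_e(q_0)$. Once this choice is fixed, the rest of the argument is bookkeeping.

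First we check that $f$ maps the fibre $\Omega_{\xi,e,q_0}$ onto the fibre $\Omega_{\eta,e,r_0}$. If $x\in\Omega_\xi$ has $x_e=q_0$, then $f(x)_e=f_e(q_0)=r_0$, so $f(x)\in\Omega_{\eta,e,r_0}$. Conversely, let $y\in\Omega_{\eta,e,r_0}$ and put $x=f^{-1}(y)\in\Omega_\xi$. Then $f_e(x_e)=y_e=r_0$, and since $f_e$ is injective, $x_e=q_0$. Hence the restriction $g$ of $f$ to $\Omega_{\xi,e,q_0}$ is a bijection onto $\Omega_{\eta,e,r_0}$. Lemma~\ref{lem:contraction-fibre} guarantees that both fibres, with coordinate partitions indexed by $E\setminus\{e\}$, are p-representations of $M/e$. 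This uses that $e$ is not a loop, so the fibres are nonempty and of the right size. By the same bijectivity count, or directly from Lemma~\ref{lem:contraction-fibre}, $|\Omega_{\xi,e,q_0}|=|\Omega_{\eta,e,r_0}|=|Q|^{r(E)-1}$.

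Second, we verify that $g$ carries each coordinate partition of $\xi_{e,q_0}$ to the corresponding one of $\eta_{e,r_0}$. Fix $i\in E\setminus\{e\}$ and $a\in Q$. The block $\{x\in\Omega_{\xi,e,q_0}:x_i=a\}$ maps under $g$ to $\{y\in\Omega_{\eta,e,r_0}:y_i=f_i(a)\}$. This holds because $g$ is coordinatewise with $f_i$ bijective, and because $g$ is a bijection between the fibres. So nonempty blocks go to nonempty blocks, and $g\,(\xi_{e,q_0})_i=(\eta_{e,r_0})_i$ for all $i$, which is exactly p-isotopy of the contraction-fibre representations.

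The main subtlety is only in the first step. The restriction of $f$ must land in, and be surjective onto, a single fibre of $\eta$. This is exactly what the coordinatewise form of $f$ from Proposition~\ref{prop: coordinate partition p-rep} provides; without it one would have to argue directly that $f$ maps the $\xi_e$-block labelled $q_0$ to a single $\eta_e$-block. The converse direction of that proposition is not needed.
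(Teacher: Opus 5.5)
Your proposal is correct and is essentially the paper's argument: take $r_0$ to be the label of the $\eta_e$-block onto which $f$ maps the fibre $\Omega_{\xi,e,q_0}$, and restrict $f$ to the two fibres. The only difference is that the paper reads the fibre-to-fibre property directly from $f\xi_e=\eta_e$ (the fibre is itself a $\xi_e$-block), so the detour through the coordinatewise form of Proposition~\ref{prop: coordinate partition p-rep} is valid but not needed.
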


\begin{proof}
Let $f:\Omega_\xi\to\Omega_\eta$ be a p-isotopy. The fibre
$\Omega_{\xi,e,q_0}$ is a block of $\xi_e$, so $f$ maps it to a block of
$\eta_e$. Hence there is an $r_0\in R$ such that
$f(\Omega_{\xi,e,q_0})=\Omega_{\eta,e,r_0}$. Restricting $f$ to these fibres
gives the required p-isotopy.
\end{proof}

For an abelian group $G$, put
\[
\Omega_G = \{(a,b,c,a-b,b-c,c-a):a,b,c\in G\}.
\] 
Let $\xi_G$ denote the system of coordinate partitions on $\Omega_G$. The
following classification is the order-four case of
Mat\'u\v{s}'s Proposition~3.1~\cite{Matus1999}.
\begin{lemma}\label{lem: p-isotopy of M(K_4)}
	The systems $\xi_{C_4}$ and $\xi_{C_2 \times C_2}$ are degree-four
	p-representations of $M(K_4)$. Every degree-four p-representation of
	$M(K_4)$ is p-isotopic to exactly one of them.
\end{lemma}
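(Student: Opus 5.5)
The lemma has three parts: both systems are representations, every degree-four representation is isotopic to one of them, and the two are not isotopic to each other. The paper labels it as the order-four case of Mat\'u\v{s}'s Proposition~3.1, so the cleanest route is to specialise that proposition. I would still organise the argument so that each step is checkable directly.

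\emph{Step 1: $\xi_G$ represents $M(K_4)$.} Let $G$ be an abelian group of order four, and read the six coordinates $(a,b,c,a-b,b-c,c-a)$ as the elements $1,\dots,6$ in that order. In the labelled $M(K_4)$ the triangles $124$, $136$, $235$, $456$ are then exactly the relations
\[
a-b=x_4,\qquad c-a=x_6,\qquad b-c=x_5,\qquad x_4+x_5+x_6=0.
\]
The map $(a,b,c)\mapsto$ point is injective, so $|\Omega_G|=4^3$.

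For $A\subseteq\{1,\dots,6\}$, the coordinates in $A$ form a group homomorphism $G^3\to G^A$. Its image is the solution set of the triangle relations restricted to $A$. That image has size $4^{r(A)}$, because any $r(A)$ independent coordinates can be chosen freely and the remaining ones are determined. All fibres of a homomorphism have equal size, so every block has cardinality $4^{3-r(A)}$. This gives the p-representation property for each $A$.

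\emph{Step 2: every representation is isotopic to some $\xi_G$.} Take any degree-four p-representation $\xi$ of $M(K_4)$. By Remark~\ref{rem: transforming to subset of cartesian} we may assume it lives on $\Omega\subseteq Q^6$ with coordinate partitions, and by Remark~\ref{rem:isotopy-invariance} the triangle projections $\Gamma_{124},\Gamma_{136},\Gamma_{235},\Gamma_{456}$ are quasigroups.

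Fix the basis $\{1,2,3\}$. Then $x_4=\Gamma^4_{124}(x_1,x_2)$, and similarly $x_5$ and $x_6$ are binary quasigroup values in the pairs $(x_2,x_3)$ and $(x_1,x_3)$. The triangle $456$ then imposes the identity
\[
\Gamma^{6}_{456}\bigl(\Gamma_{124}(x_1,x_2),\Gamma_{235}(x_2,x_3)\bigr)=\Gamma_{136}(x_1,x_3)
\qquad\text{for all }x_1,x_2,x_3.
\]
This is the generalized associativity equation with four quasigroups. By the Acz\'el--Belousov--Hossz\'u theorem, all four quasigroups are isotopic to a single group $G$, with the isotopies fitting together consistently. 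A quasigroup isotopic to a group in this coherent way may be taken to be a group isotope, and the resulting group is abelian here because of the symmetric way the four triangles enter.

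Absorb those isotopies into coordinatewise bijections $h_e$; Proposition~\ref{prop: coordinate partition p-rep} guarantees this preserves the representation. After that, $\Omega$ becomes $\Omega_G$ for some group $G$ of order four. Every group of order four is abelian, so $G\cong C_4$ or $C_2\times C_2$.

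\emph{Step 3: the two systems are not isotopic.} By Proposition~\ref{prop: coordinate partition p-rep}, any p-isotopy between $\xi_{C_4}$ and $\xi_{C_2\times C_2}$ is coordinatewise. It would therefore make the quasigroup $\Gamma_{124}$ of $\Omega_{C_4}$ isotopic to that of $\Omega_{C_2\times C_2}$. Each of these is a group subtraction table. Isotopic groups are isomorphic (Albert), and $C_4\not\cong C_2\times C_2$, so no such isotopy exists.

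\emph{Main obstacle.} The hard step is Step~2: justifying the generalized associativity reduction and normalising all four triangle quasigroups simultaneously to the explicit form $(a,b,c,a-b,b-c,c-a)$, including the signs. In practice I would cite Mat\'u\v{s}'s proof of Proposition~3.1 for this normalisation rather than redo it.
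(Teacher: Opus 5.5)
Your proposal is correct and follows the paper's proof: the paper also verifies the refinement counts for $\xi_{C_4}$ and $\xi_{C_2\times C_2}$ directly, and obtains existence and uniqueness of the isotopy class from the order-four case of Mat\'u\v{s}'s Proposition~3.1, of which your Acz\'el--Belousov--Hossz\'u plus Albert sketch is a reasonable reconstruction. Two small corrections to that sketch: the identity $\Gamma^{6}_{456}(\Gamma_{124}(x_1,x_2),\Gamma_{235}(x_2,x_3))=\Gamma_{136}(x_1,x_3)$ is not itself generalized associativity and becomes so only after replacing $\Gamma_{235}$ by its division quasigroup (solving $x_5=\Gamma^5_{235}(x_2,x_3)$ for $x_3$); and abelianness does not come from the symmetry of the triangles, since nonabelian groups also give representations $(a,b,c,ab^{-1},bc^{-1},ca^{-1})$ of $M(K_4)$, but only from $|G|=4$, which you then correctly invoke.
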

\begin{proof}
The coordinate-refinement counts show directly that
$\xi_{C_4}$ and $\xi_{C_2\times C_2}$ represent the labelled $M(K_4)$.
The classification and the uniqueness of the two p-isotopy classes are the
order-four case of~\cite[Proposition~3.1]{Matus1999}.
\end{proof}
\section{\texorpdfstring{The remaining degree-four exclusions}{The remaining degree-four exclusions}}\label{sec:structural}

In this section, we show that the remaining four matroids
$P_6$, $(F_7^-)^*$, $P_8$, and $P_8''$ have no degree-four p-representation.
In each case, we assume the contrary, put a representation into coordinate form
using Proposition~\ref{prop: coordinate partition p-rep}, and pass to matroid
quasigroup equations via Proposition~\ref{prop:matus-correspondence}. The
structural lemmas from Section~\ref{sec:tools} then yield a contradiction.

\subsection{The matroid \texorpdfstring{$P_6$}{P6}}\label{sec:p6}

Let $\mathcal C$ be the set of circuits of $P_6$, and suppose that
$(\Gamma_C)_{C\in\mathcal C}$ is a solution of its quasigroup equations over
$V=\F_2^2$. Consider the basis $B=013$. Its fundamental circuits are
\[
\gamma(2;B)=012,\qquad \gamma(4;B)=0134,\qquad \gamma(5;B)=0135.
\]
Applying the circuit equations to
\[
(i,C)=(4,0234),(5,0235),(5,0145),(5,0345),(5,0245)
\]
gives, for every $x,y,z\in V$,
\begin{align}
\Gamma_{0134}^{4}(x,y,z)
 &=\Gamma_{0234}^{4}\bigl(x,\Gamma_{012}^{2}(x,y),z\bigr),
 &&\circuittag{0234}\label{eq:p6g1}\\
\Gamma_{0135}^{5}(x,y,z)
 &=\Gamma_{0235}^{5}\bigl(x,\Gamma_{012}^{2}(x,y),z\bigr),
 &&\circuittag{0235}\label{eq:p6g2}\\
\Gamma_{0135}^{5}(x,y,z)
 &=\Gamma_{0145}^{5}\bigl(x,y,\Gamma_{0134}^{4}(x,y,z)\bigr),
 &&\circuittag{0145}\label{eq:p6g3}\\
\Gamma_{0135}^{5}(x,y,z)
 &=\Gamma_{0345}^{5}\bigl(x,z,\Gamma_{0134}^{4}(x,y,z)\bigr),
 &&\circuittag{0345}\label{eq:p6g4}\\
\Gamma_{0135}^{5}(x,y,z)
 &=\Gamma_{0245}^{5}\bigl(x,\Gamma_{012}^{2}(x,y),
       \Gamma_{0134}^{4}(x,y,z)\bigr),
 &&\circuittag{0245}.\label{eq:p6g5}
\end{align}
The next two lemmas show that, if the $P_6$ quasigroup equations have a
solution over $V$, then they have a solution $(\Lambda_C)_{C\in\mathcal C}$
in which $\Lambda_{012}^2, \Lambda_{0134}^4,\Lambda_{0135}^5$ have convenient forms.

\begin{lemma}\label{lem:p6-binary}
If the $P_6$ quasigroup equations have a solution over $V$, then they have a
solution $(\Lambda_C)_{C\in\mathcal C}$ such that
\[
\Lambda_{012}^{2}(x,y)=x+y
\qquad(x,y\in V).
\]
\end{lemma}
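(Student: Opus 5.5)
The plan is to show first that $\Gamma_{012}^2$ has an orthogonal mate, deduce from Lemma~\ref{lem:orthogonal4} that it is affine, and then remove its linear parts and constant by a simultaneous isotopy, which by Remark~\ref{rem:isotopy-invariance} again yields a solution.

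\textbf{Step 1: an orthogonal mate.} Fix $z\in V$ and use \eqref{eq:p6g1} and \eqref{eq:p6g2}, in the form obtained by swapping the roles of the circuits $012$ and $0234$. Better, take the p-representation directly: by Proposition~\ref{prop:matus-correspondence} and Proposition~\ref{prop: coordinate partition p-rep} we may assume a coordinate p-representation $\Omega\subseteq V^{E}$ with $\Gamma_C=\pi_C(\Omega)$. The set $014$ is a basis of $P_6$, since the only $3$-element circuit is $012$. Restricted to $\Omega$, the map $(x_0,x_1)\mapsto x_2=\Gamma^2_{012}(x_0,x_1)$ is then a quasigroup map. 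Next fix $x_3=c$. By Lemma~\ref{lem:contraction-fibre}, the fibre represents $P_6/3$, which is rank two on $\{0,1,2,4,5\}$. In $P_6/3$ the pairs $0,1,2,4$ are pairwise independent, because $\{i,j,3\}$ is a basis for $i,j\in\{0,1,2,4\}$.

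Hence, on the fibre, each of $x_2$ and $x_4$ is a quasigroup function of $(x_0,x_1)$. Moreover, $\{2,4\}$ is a basis of $P_6/3$, so the map $(x_0,x_1)\mapsto(x_2,x_4)$ is a bijection of $V^2$. Therefore $\Gamma^2_{012}$ and $(x_0,x_1)\mapsto x_4$ are orthogonal Latin squares. This is the key step, and the one I expect to need the most care: one must check that the fibre projections onto each pair of $\{0,1,2,4\}$ are bijective, which follows from the equal block counts $4^{2}$ of $P_6/3$.

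\textbf{Step 2: the affine form.} By Lemma~\ref{lem:orthogonal4}, $\Gamma_{012}^2(x,y)=xA+yB+c$ with $A,B\in\GL_2(\F_2)$.

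\textbf{Step 3: normalizing by isotopy.} Apply the simultaneous isotopy with
\[
h_0(x)=xA,\qquad h_1(y)=yB+c,\qquad h_e=\mathrm{id}\quad(e\neq 0,1).
\]
These maps are bijections of $V$. Setting $\Lambda_C=\{(h_j(x_j))_{j\in C}: x_C\in\Gamma_C\}$ and invoking Remark~\ref{rem:isotopy-invariance}, $(\Lambda_C)$ is again a solution. For $C=012$, a triple $(u,v,w)$ lies in $\Lambda_{012}$ exactly when $w=\Gamma^2_{012}\bigl(uA^{-1},(v-c)B^{-1}\bigr)=u+v-c+c=u+v$. Thus $\Lambda^2_{012}(x,y)=x+y$, as claimed.

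Only Step 1 involves genuine matroid input; the remaining steps are routine.
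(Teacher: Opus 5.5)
Your proposal is correct and follows the paper's proof: you use the same orthogonal mate for $\Gamma^2_{012}$ (the retract of $\Gamma^4_{0134}$ with $x_3$ fixed), apply Lemma~\ref{lem:orthogonal4}, and normalize by a simultaneous isotopy. The differences are minor. You check orthogonality from the block counts of the fibre representing $P_6/3$ (via Lemma~\ref{lem:contraction-fibre}, after passing to a coordinate p-representation with $\pi_C(\Omega)=\Gamma_C$), whereas the paper reads it off the circuit equation \eqref{eq:p6g1}; and you absorb the constant $c$ into $h_1$ rather than $h_2$.
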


\begin{proof}
Let $(\Gamma_C)_{C\in\mathcal C}$ be a solution. Fix $z\in V$ and consider
\[
(x,y)\longmapsto
\bigl(\Gamma_{012}^{2}(x,y),\Gamma_{0134}^{4}(x,y,z)\bigr).
\]
This map is bijective. Indeed, from an output pair $(u,w)$, equation
\eqref{eq:p6g1} reads
\[
w=\Gamma_{0234}^{4}(x,u,z).
\]
If $(x_1,y_1)$ and $(x_2,y_2)$ are mapped to $(u,w)$, first the quasigroup 
property of $\Gamma_{0234}^4$ implies that $x_1 = x_2$, and then
$\Gamma_{012}^{2}(x_1,y_1)= \Gamma_{012}^2(x_1,y_2)=u$ implies that $y_1 = y_2$. Hence $\Gamma_{012}^{2}$ has an
orthogonal mate. By Lemma~\ref{lem:orthogonal4}, there are
$A,B\in\GL_2(\F_2)$ and $c\in V$ such that
\[
\Gamma_{012}^{2}(x,y)=xA+yB+c.
\]

Define permutations of $V$ by
\[
h_0(x)=xA,\qquad h_1(x)=xB,\qquad h_2(x)=x-c,
\qquad h_i(x)=x\quad(i=3,4,5).
\]
Using $h=(h_i)_{i\in P_6}$, transform the current solution to get a new solution
\[
(\Lambda_C)_{C\in\mathcal C}
   =\bigl(h(\Gamma_C)\bigr)_{C\in\mathcal C}.
\]
The definitions of $h_0,h_1,h_2$ give
$\Lambda_{012}^{2}(x,y)=x+y$.
\end{proof}

\begin{lemma}\label{lem:p6-ternary}
If the $P_6$ quasigroup equations have a solution over $V$, then they have a
solution $(\Lambda_C)_{C\in\mathcal C}$ and matrices
$R,S,C,F\in\GL_2(\F_2)$ such that, for every $x,y,z\in V$,
\begin{align}
\Lambda_{012}^{2}(x,y)&=x+y,\label{eq:p6normal1}\\
\Lambda_{0134}^{4}(x,y,z)&=x+yR+zC,\label{eq:p6normal2}\\
\Lambda_{0135}^{5}(x,y,z)&=x+yS+zF.\label{eq:p6normal3}
\end{align}
\end{lemma}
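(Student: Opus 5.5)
The plan is to start from the solution $(\Lambda_C)_{C\in\mathcal C}$ given by Lemma~\ref{lem:p6-binary}, in which $\Lambda_{012}^2(x,y)=x+y$. Since isotopes of solutions are again solutions (Remark~\ref{rem:isotopy-invariance}), equations \eqref{eq:p6g1}--\eqref{eq:p6g5} hold with $\Gamma$ replaced by $\Lambda$. I then show that $\Lambda_{0134}^4$ and $\Lambda_{0135}^5$ are affine using Lemma~\ref{lem:retract}. After that, a further isotopy acting only on coordinates $4$ and $5$ normalizes them, and it leaves $\Lambda_{012}^2$ untouched.

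\textbf{Affineness via orthogonal mates.} By Lemma~\ref{lem:retract}, it suffices to show that every binary retract of $T_4:=\Lambda_{0134}^4$ and of $T_5:=\Lambda_{0135}^5$ is affine. By Lemma~\ref{lem:orthogonal4}, it suffices in turn to exhibit an orthogonal mate for each retract.

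\begin{itemize}[leftmargin=*]
\item \emph{Fix $z$, retract of $T_4$.} The proof of Lemma~\ref{lem:p6-binary} already shows that $(x,y)\mapsto(\Lambda_{012}^2(x,y),T_4(x,y,z))$ is bijective.
\item \emph{Fix $z$, retract of $T_5$.} The same argument applies using \eqref{eq:p6g2} in place of \eqref{eq:p6g1}.
\item \emph{Fix $y$, retracts of $T_4$ and $T_5$.} I claim that $(x,z)\mapsto(T_4(x,y,z),T_5(x,y,z))$ is bijective. Given the outputs $(w,v)$, equation \eqref{eq:p6g3} reads $v=\Lambda_{0145}^5(x,y,w)$. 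With $y$ and $w$ known, the quasigroup property determines $x$. Then $w=T_4(x,y,z)$ determines $z$. Hence the two retracts are orthogonal mates of each other.
\item \emph{Fix $x$, retracts of $T_4$ and $T_5$.} The map $(y,z)\mapsto(T_4,T_5)$ is bijective by the analogous argument. Equation \eqref{eq:p6g4} gives $v=\Lambda_{0345}^5(x,z,w)$, which determines $z$, and then $w=T_4(x,y,z)$ determines $y$.
\end{itemize}

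Thus all six retracts have orthogonal mates and so are affine. Lemma~\ref{lem:retract} then gives
\[
T_4=xA+yB+zC_0+t,\qquad T_5=xA'+yB'+zC'+t',
\]
with all six matrices in $\GL_2(\F_2)$.

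\textbf{Normalization.} I would apply the simultaneous isotopy with $h_0=h_1=h_2=h_3=\mathrm{id}$, $h_4(w)=(w+t)A^{-1}$ and $h_5(v)=(v+t')A'^{-1}$. These are permutations of $V$, and by Remark~\ref{rem:isotopy-invariance} the result is still a solution. Since coordinates $0,1,2$ are untouched, $\Lambda_{012}^2=x+y$ is preserved. The new maps are $x+yBA^{-1}+zC_0A^{-1}$ and $x+yB'A'^{-1}+zC'A'^{-1}$. Setting $R=BA^{-1}$, $C=C_0A^{-1}$, $S=B'A'^{-1}$ and $F=C'A'^{-1}$, all in $\GL_2(\F_2)$, yields \eqref{eq:p6normal1}--\eqref{eq:p6normal3}.

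\textbf{Main obstacle.} The main obstacle is finding orthogonal mates for the retracts with $y$ or $x$ fixed. Those are exactly the cases where $\Lambda_{012}^2$ is useless, and the trick is to pair $T_4$ with $T_5$ through the circuits $0145$ and $0345$. One must also check that the coordinates of these circuits match \eqref{eq:p6g3}--\eqref{eq:p6g4} as stated, so that the recovery steps are valid.
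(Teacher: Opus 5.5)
Your proof is correct and follows the paper's argument essentially step for step. The fixed-$z$ retracts take $\Lambda_{012}^2$ as orthogonal mate, and the fixed-$x$ and fixed-$y$ retracts of $\Lambda_{0134}^4$ and $\Lambda_{0135}^5$ are mates of each other; Lemmas~\ref{lem:orthogonal4} and~\ref{lem:retract}, followed by an isotopy on coordinates $4$ and $5$ only, finish the argument. The only cosmetic difference is that for fixed $x$ you recover $z$ through circuit $0345$ via \eqref{eq:p6g4}, whereas the paper uses \eqref{eq:p6g3} (recovering $y$ through $\Lambda_{0145}^5$) in both the fixed-$x$ and fixed-$y$ cases; either choice works.
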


\begin{proof}
By Lemma~\ref{lem:p6-binary}, start with a solution
$(\Gamma_C)_{C\in\mathcal C}$ for which
$\Gamma_{012}^{2}(x,y)=x+y$.

For fixed $z$, equations \eqref{eq:p6g1} and \eqref{eq:p6g2} show that the
fixed-$z$ retracts of $\Gamma_{0134}^{4}$ and $\Gamma_{0135}^{5}$ each have
the orthogonal mate $\Gamma_{012}^{2}$. For fixed $x$, equation
\eqref{eq:p6g3} shows that the corresponding retracts of
$\Gamma_{0134}^{4}$ and $\Gamma_{0135}^{5}$ are orthogonal: their two values
recover $y$ through $\Gamma_{0145}^{5}$ and then recover $z$ through
$\Gamma_{0134}^{4}$. With $y$ fixed, the same equation first recovers $x$
and then $z$. Thus every binary retract of each of
$\Gamma_{0134}^{4}$ and $\Gamma_{0135}^{5}$ has an orthogonal mate.

Lemmas~\ref{lem:orthogonal4} and~\ref{lem:retract} now give matrices
$A_i\in\GL_2(\F_2)$ and constants $c_1,c_2\in V$ such that
\begin{align*}
\Gamma_{0134}^{4}(x,y,z)&=xA_1+yA_2+zA_3+c_1,\\
\Gamma_{0135}^{5}(x,y,z)&=xA_4+yA_5+zA_6+c_2.
\end{align*}
Define
\[
h_4(x)=(x-c_1)A_1^{-1},\qquad
h_5(x)=(x-c_2)A_4^{-1},\qquad
h_i(x)=x\quad(i=0,1,2,3).
\]
Transform the whole solution by $h=(h_i)_{i\in P_6}$ and denote the result
by $(\Lambda_C)_{C\in\mathcal C}$. Since coordinates $0,1,2$ were not
changed, \eqref{eq:p6normal1} still holds. Setting
\[
R=A_2A_1^{-1},\quad C=A_3A_1^{-1},\quad
S=A_5A_4^{-1},\quad F=A_6A_4^{-1}
\]
gives \eqref{eq:p6normal2} and \eqref{eq:p6normal3}.
\end{proof}

\begin{theorem}\label{lem:p6}
The matroid $P_6$ has no partition representation of degree four.
\end{theorem}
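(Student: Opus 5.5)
The plan is to argue by contradiction. Suppose $P_6$ has a degree-four partition representation. By Proposition~\ref{prop: coordinate partition p-rep} and Proposition~\ref{prop:matus-correspondence}, its quasigroup equations then have a solution over $V=\F_2^2$. Lemma~\ref{lem:p6-ternary} lets us assume the normal forms \eqref{eq:p6normal1}--\eqref{eq:p6normal3}.

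Next I would substitute these normal forms into the remaining circuit equations \eqref{eq:p6g1}, \eqref{eq:p6g2}, \eqref{eq:p6g3}, \eqref{eq:p6g4}, \eqref{eq:p6g5}. In each equation I solve for the quasigroup on the right-hand side as an explicit affine map. For instance, \eqref{eq:p6g2} with $u=x+y$ gives
\[
\Gamma_{0235}^5(x,u,z)=x(I+S)+uS+zF.
\]
For \eqref{eq:p6g3} I put $w=x+yR+zC$ and eliminate $z=(w+x+yR)C^{-1}$. This expresses $\Gamma_{0145}^5$ with coefficient matrices $I+D$, $S+RD$ and $D$, where $D=C^{-1}F$. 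Similarly, \eqref{eq:p6g4} (eliminating $y$) yields the coefficient $I+R^{-1}S$ on $x$. Also \eqref{eq:p6g1} yields the coefficient $I+R$ on $x$. Finally, \eqref{eq:p6g5} (eliminating $y=u+x$ and then $z$) yields the coefficient
\[
I+S+(I+R)D
\]
on $x$. Since each $\Gamma_C$ is a quasigroup, every coefficient matrix so obtained must lie in $\GL_2(\F_2)$.

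The key algebraic observation concerns matrices $M\in\GL_2(\F_2)\cong S_3$. Such an $M$ satisfies $M+I\in\GL_2(\F_2)$ exactly when $M$ has order three, i.e.\ $M\in\{\omega,\omega^2\}$, where $\omega^2+\omega+I=0$. Moreover $\{0,I,\omega,\omega^2\}$ is a commutative subring isomorphic to $\F_4$.

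Applying this observation in turn gives the following chain of conclusions.
\begin{itemize}[nosep]
\item From $I+R$ and $I+S$ invertible, $R,S\in\{\omega,\omega^2\}$.
\item From $I+R^{-1}S$ invertible, $R^{-1}S$ has order three, which forces $S=R^2$.
\item From $I+D$ invertible, $D\in\{\omega,\omega^2\}$.
\item Invertibility of $S+RD$ means $RD\neq S=R^2$, i.e.\ $D\ne R$, so $D=R^2$.
\end{itemize}
All the matrices now commute inside this copy of $\F_4$. The coefficient from \eqref{eq:p6g5} becomes
\[
I+R^2+(I+R)R^2=I+R^3=I+I=0,
\]
which is not invertible. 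This contradicts the quasigroup property of $\Gamma_{0245}^5$ in its first variable.

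The main obstacle is bookkeeping rather than ideas. I must correctly invert the substitutions in \eqref{eq:p6g3}--\eqref{eq:p6g5}, which involve noncommuting $2\times2$ matrices, and order the matrix products correctly (row-vector convention). The step needing the most care is confirming that the coefficients actually used reduce to expressions in $R$, $S$ and $D=C^{-1}F$ alone. If some coefficient instead involves $C$ and $F$ separately, I would factor out $C$ on the left to bring it to this form.
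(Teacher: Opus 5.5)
Your proof is correct and follows the paper's argument essentially step for step. You use the same normal form from Lemma~\ref{lem:p6-ternary}, the same coefficient-invertibility conditions from the circuits $0234$, $0235$, $0345$, $0145$, $0245$, and the same final contradiction in terms of $D=C^{-1}F$ (called $X$ in the paper). The only cosmetic difference is where you get the condition $R+D\in\GL_2(\F_2)$: you read it off the $y$-coefficient of the $0145$ equation, whereas the paper extracts the equivalent condition $F+CR\in\GL_2(\F_2)$ from the $z$-coefficient of the $0345$ equation and remarks that the $0145$ coefficient reproduces it.
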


\begin{proof}
Assume to the contrary that $P_6$ has a partition representation of degree
four. Then its quasigroup equations have a solution over $V$. By
Lemma~\ref{lem:p6-ternary}, we may take a solution
$(\Lambda_C)_{C\in\mathcal C}$ satisfying
\eqref{eq:p6normal1}--\eqref{eq:p6normal3}. Put
\[
Q=\Lambda_{012}^{2},\qquad
f=\Lambda_{0134}^{4},\qquad
g=\Lambda_{0135}^{5}.
\]
Rewriting \eqref{eq:p6g1}--\eqref{eq:p6g5} for this solution gives
\begin{align}
f(x,y,z)&=\Lambda_{0234}^{4}\bigl(x,Q(x,y),z\bigr),
&&\circuittag{0234}\label{eq:p6l1}\\
g(x,y,z)&=\Lambda_{0235}^{5}\bigl(x,Q(x,y),z\bigr),
&&\circuittag{0235}\label{eq:p6l2}\\
g(x,y,z)&=\Lambda_{0145}^{5}\bigl(x,y,f(x,y,z)\bigr),
&&\circuittag{0145}\label{eq:p6l3}\\
g(x,y,z)&=\Lambda_{0345}^{5}\bigl(x,z,f(x,y,z)\bigr),
&&\circuittag{0345}\label{eq:p6l4}\\
g(x,y,z)&=\Lambda_{0245}^{5}\bigl(x,Q(x,y),f(x,y,z)\bigr),
&&\circuittag{0245}.\label{eq:p6l5}
\end{align}

In \eqref{eq:p6l1}, put $u=x+y$, so $y=u+x$. Then
\[
\Lambda_{0234}^{4}(x,u,z)=x(I+R)+uR+zC.
\]
Because $\Lambda_{0234}^{4}$ is a ternary quasigroup, $I+R$ is invertible.
Equation \eqref{eq:p6l2} similarly gives that $I+S$ is invertible. Note that 
$\GL_2(\F_2) \cong S_3$ as groups. Since $I+R$ and $I+S$ are invertible, it follows
that $R,S \neq I$. Additionally, $R$ and $S$ cannot have order two, since otherwise it follows $(I+R)^2 = (I+S)^2 = 0$.
Thus each is one of the two order-three elements of $\GL_2(\F_2)$.

Next use \eqref{eq:p6l4}. Write $w=f(x,y,z)$ and solve
\[
y=(w+x+zC)R^{-1}.
\]
Substitution into $g$ gives
\[
\Lambda_{0345}^5(x,z,w)=x(I+R^{-1}S)+wR^{-1}S+z(CR^{-1}S+F).
\]
Since this is the affine expression for the ternary quasigroup
$\Lambda_{0345}^{5}(x,z,w)$, both
\[
I+R^{-1}S\qquad\text{and}\qquad CR^{-1}S+F
\]
are invertible. The first condition is equivalent to invertibility of $R+S$,
so $R\ne S$. The two order-three elements of $\GL_2(\F_2)$ are inverses;
after naming one of them $R$, we have $S=R^2$ and $I + S = R$. Therefore
$R^{-1}=R^2$ and $R^{-1}S=R$, and the second condition from the same circuit
becomes
\begin{equation}\label{eq:p6condCR}
F+CR\in\GL_2(\F_2).
\end{equation}

Equation \eqref{eq:p6l3} supplies another condition. Solving
$w=f(x,y,z)$ for $z$ gives
\[
\Lambda_{0145}^5(x,y,w)=x(I+C^{-1}F)+y(S+RC^{-1}F)+wC^{-1}F.
\]
The coefficient of $x$ must be invertible, equivalently
\begin{equation}\label{eq:p6condC}
F+C\in\GL_2(\F_2).
\end{equation}
The coefficient of $y$ reproduces \eqref{eq:p6condCR}.

Finally, in \eqref{eq:p6l5} put $u=x+y$ and again write $w=f(x,y,z)$.
Then
\[
z=\bigl(w+x(I+R)+uR\bigr)C^{-1}.
\]
The coefficient of $x$ in the affine expression for
$\Lambda_{0245}^{5}(x,u,w)$ is
\[
I+S+(I+R)C^{-1}F
=R+R^2C^{-1}F
=R^2\bigl(R^2+C^{-1}F\bigr).
\]
It must be invertible, which is equivalent to
\begin{equation}\label{eq:p6condCR2}
F+CR^2\in\GL_2(\F_2).
\end{equation}

Put $X=C^{-1}F$. Conditions \eqref{eq:p6condC}, \eqref{eq:p6condCR}, and
\eqref{eq:p6condCR2}, obtained respectively from the circuits $0145$, $0345$,
and $0245$, say that
\[
X+I,\qquad X+R,\qquad X+R^2
\]
are all invertible. The group $\GL_2(\F_2)\cong S_3$ consists of the identity,
three involutions, and the two order-three elements $R,R^2$. If $X+I$ is
invertible, then $X$ is neither the identity nor an involution. Hence
$X\in\{R,R^2\}$. One of $X+R$ and $X+R^2$ is then zero, a contradiction.
\end{proof}

\subsection{The dual non-Fano matroid}\label{sec:dnf}

Recall that the dual non-Fano matroid $(F_7^-)^*$ is a rank-four matroid on the ground set $E =\{0,\dots,6\}$. All four-element subsets are bases except
\[
0145, \qquad 0235, \qquad 0346, \qquad 1234, \qquad 1356, \qquad 2456,
\]
which are circuits.

Choose the basis $B=0134$. The fundamental circuits are
\[
\gamma(2;B)=1234,\qquad \gamma(5;B)=0145,\qquad \gamma(6;B)=0346.
\]
We want to show that $(F_7^-)^*$ does not have p-representations of degree four. 
First, we state and prove a lemma which will be very useful.

\begin{lemma}\label{lem: projection and MK4}
	 If $(F_7^-)^*$ has a degree-four p-representation, then there are a
	four-element group $G\in \{C_4,C_2 \times C_2\}$, an element $d_0 \in G$, and
	a degree-four p-representation $\xi$ of $(F_7^-)^*$ on
	$\Omega_{\xi} \subseteq G^7$, with coordinate partitions, such that the fibre
	of coordinate $4$ over $d_0$ is
	\[
	\Omega_{\xi,4,d_0} = \{x \in \Omega_{\xi}:x_4 = d_0\}
	= \{(a,b,b-c,c,d_0,a-b,c-a):a,b,c\in G\}.
	\]

\end{lemma}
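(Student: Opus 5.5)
The plan is to obtain the fibre from the classification of degree-four representations of $M(K_4)$ (Lemma~\ref{lem: p-isotopy of M(K_4)}), and then to pull the resulting p-isotopy back to the whole representation by a coordinatewise relabelling.

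\textbf{Step 1: coordinate form and contraction.} Suppose $(F_7^-)^*$ has a degree-four p-representation. Fix an arbitrary four-element abelian group $G$ as the alphabet. By Remark~\ref{rem: transforming to subset of cartesian}, the representation is p-isotopic to a representation $\eta$ on some $\Omega_\eta\subseteq G^7$ with coordinate partitions. Element $4$ is not a loop, so fix any $q_0\in G$. Lemma~\ref{lem:contraction-fibre} then says that the fibre $\Omega_{\eta,4,q_0}$ with its coordinate partitions is a degree-four p-representation of $(F_7^-)^*/4$. Coordinate $4$ is constant on this fibre, so we discard it and view the fibre in $G^{\{0,1,2,3,5,6\}}$.

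\textbf{Step 2: identify the fibre with a standard representation of $M(K_4)$.} Relabel the coordinates of the fibre by the isomorphism of Lemma~\ref{lem:matroid isomorphism}, $0\mapsto1$, $1\mapsto2$, $2\mapsto5$, $3\mapsto3$, $5\mapsto4$, $6\mapsto6$. This gives a degree-four p-representation of the labelled $M(K_4)$. By Lemma~\ref{lem: p-isotopy of M(K_4)}, it is p-isotopic to $\xi_{G'}$ for some $G'\in\{C_4,C_2\times C_2\}$.

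Since $|G|=|G'|$, we may assume from the start that the alphabet is this $G'$. Concretely, first apply a coordinatewise bijection $G\to G'$ to $\eta$, which is allowed by Proposition~\ref{prop: coordinate partition p-rep}. Renaming $G'$ as $G$, Proposition~\ref{prop: coordinate partition p-rep} now gives bijections $f_e:G\to G$ for $e\in\{0,1,2,3,5,6\}$ that carry the fibre onto $\Omega_G$, read in the relabelled coordinates.

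To read $\Omega_G$ back in the original labels, recall its coordinates:
\begin{itemize}
\item $M(K_4)$-coordinates $1,\dots,6$ of $\Omega_G$ are $a,\,b,\,c,\,a-b,\,b-c,\,c-a$;
\item pulled back through the isomorphism, coordinate $0$ gets $a$, $1$ gets $b$, $2$ gets $b-c$, $3$ gets $c$, $5$ gets $a-b$, and $6$ gets $c-a$.
\end{itemize}
This is exactly the tuple $(a,b,b-c,c,\cdot,a-b,c-a)$ in the statement.

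\textbf{Step 3: extend the relabelling to all of $\Omega_\eta$.} Choose any $d_0\in G$ and any bijection $f_4:G\to G$ with $f_4(q_0)=d_0$. Apply $f=(f_e)_{e\in E}$ to all of $\Omega_\eta$. By the converse part of Proposition~\ref{prop: coordinate partition p-rep}, the image $\Omega_\xi$ carries a degree-four p-representation $\xi$ of $(F_7^-)^*$ with coordinate partitions.

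Because $f$ acts coordinatewise and $f_4$ is a bijection, $f$ maps the fibre $\{x_4=q_0\}$ exactly onto the fibre $\{x_4=d_0\}$. On the remaining coordinates it acts by the $f_e$, so this image is the displayed set.

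\textbf{Main obstacle.} The only delicate point is bookkeeping rather than mathematics. The p-isotopy supplied by Lemma~\ref{lem: p-isotopy of M(K_4)} is a priori just a bijection of fibres. It must be upgraded to coordinatewise bijections (via Proposition~\ref{prop: coordinate partition p-rep}) so that it extends to the whole ground set. Separately, the ambient alphabet has to be matched to the group $G'$ found in Step 2, since the classification lemma may return either $C_4$ or $C_2\times C_2$. Once both points are handled as above, the remaining verification is routine substitution.
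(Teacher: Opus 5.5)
Your proposal is correct and matches the paper's proof step for step. You put the representation into coordinate form via Remark~\ref{rem: transforming to subset of cartesian}, take the contraction fibre over $x_4=q_0$, identify it with $M(K_4)$ and apply the classification, then extend the six relabellings by a bijection on coordinate $4$ sending $q_0$ to $d_0$; you also spell out two points the paper leaves implicit, namely upgrading the fibre p-isotopy to coordinatewise bijections via Proposition~\ref{prop: coordinate partition p-rep} and reading $\Omega_G$ back through the isomorphism of Lemma~\ref{lem:matroid isomorphism}.
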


\begin{proof}
Put an assumed representation into coordinate form over a four-element
set $Q$, as in Remark~\ref{rem: transforming to subset of cartesian}, and choose
$q_0\in Q$. Lemma~\ref{lem:contraction-fibre} shows that its fibre over
$x_4=q_0$ represents $(F_7^-)^*/4$. By Lemma~\ref{lem:matroid isomorphism}, this
contraction is isomorphic to $M(K_4)$; hence Lemma~\ref{lem: p-isotopy of M(K_4)}
puts the fibre, after coordinate relabellings, into the displayed canonical form
over one of the groups $G\in\{C_4,C_2\times C_2\}$. Extend the six relabellings
to the original seven coordinates, choosing any bijection on coordinate $4$ that
sends $q_0$ to some $d_0\in G$. The resulting p-isotopic representation has the
required fibre.
\end{proof}
Now, we show the main result of this subsection.

\begin{theorem}\label{lem:dnf}
The matroid $(F_7^-)^*$ has no partition representation of degree four.
\end{theorem}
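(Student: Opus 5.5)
We argue by contradiction. Suppose $(F_7^-)^*$ has a degree-four p-representation, and apply Lemma~\ref{lem: projection and MK4}. This gives a group $G\in\{C_4,C_2\times C_2\}$, an element $d_0\in G$, and a coordinate-form representation $\Omega_\xi\subseteq G^7$ whose fibre over $x_4=d_0$ is the canonical $M(K_4)$ set.

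We then pass to the matroid quasigroup equations through Remark~\ref{rem:isotopy-invariance}, using the basis $B=0134$. This gives three ternary quasigroups,
\[
\Gamma^2_{1234}(x_1,x_3,x_4),\qquad \Gamma^5_{0145}(x_0,x_1,x_4),\qquad \Gamma^6_{0346}(x_0,x_3,x_4),
\]
which determine $\Omega_\xi$. Every other circuit yields a relation among them. Reading these quasigroups on the slice $x_4=d_0$ from the displayed fibre $(a,b,b-c,c,d_0,a-b,c-a)$, and using $x_0=a$, $x_1=b$, $x_3=c$, we get
\[
\Gamma^2(b,c,d_0)=b-c,\qquad \Gamma^5(a,b,d_0)=a-b,\qquad \Gamma^6(a,c,d_0)=c-a.
\]
So on one slice all three maps are group-linear.

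The key step is to show that $G$ must be $C_2\times C_2$ and that every binary retract is affine. For this I would copy the $P_6$ argument. The circuit equations for $0235$, $1356$ and $2456$, namely
\[
x_5=\Gamma_{0235}^5\bigl(x_0,\Gamma^2,x_3\bigr),\quad\text{and so on},
\]
show that certain pairs of the three ternary maps have orthogonal retracts. For example, with $x_4$ fixed, $\Gamma^2(x_1,x_3,\cdot)$ and $\Gamma^5$ jointly determine the input through $0235$. In particular, the fixed-$x_4$ binary retract $(b,c)\mapsto b-c$ must have an orthogonal mate. Subtraction in $C_4$ is the cyclic Latin square, which has no orthogonal mate. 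Hence $G=C_2\times C_2=V$. Next I would show that every binary retract of each $\Gamma$, obtained by fixing $x_0$, $x_1$ or $x_3$ instead of $x_4$, also has an orthogonal mate. The arguments come from the circuits $0235$, $1356$ and $2456$, each of which contains two of the non-basis elements $2,5,6$. Lemmas~\ref{lem:orthogonal4} and~\ref{lem:retract} then make all three maps affine over $V$:
\[
\Gamma^2=x_1A_1+x_3A_3+x_4A_4+c,
\]
and similarly for the other two. The slice data fixes $A_1=A_3=I$ for $\Gamma^2$, and likewise for the others, up to the coefficients of $x_4$. After relabelling coordinate $4$ by an affine bijection, one coefficient can be normalized to $I$.

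Finally, I would substitute into the three remaining circuits $0235$, $1356$ and $2456$, and require the induced ternary maps to have invertible coefficient matrices, exactly as in Theorem~\ref{lem:p6}. Solving each circuit for its last variable expresses the coefficients in terms of the unknown $x_4$-matrices $D_2,D_5,D_6\in\GL_2(\F_2)$. The resulting conditions take the form "$D_i+D_j$ invertible" together with a linear relation such as $D_2+D_5+D_6$ being forced to vanish or to be singular. Using $\GL_2(\F_2)\cong S_3$ and the fact that $X+Y$ is invertible only when $X^{-1}Y$ has order three, the pairwise conditions force $D_2,D_5,D_6$ to be distinct cosets of the order-three subgroup. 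This is impossible, because that subgroup has only two cosets, so a pigeonhole contradiction follows. This mirrors the parity obstruction for $F_7^-$.

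The main obstacle is the orthogonality step for the retracts that fix a coordinate other than $x_4$. I expect to need a careful choice of circuit equations so that each such pair of maps jointly recovers both free inputs. If that fails, I would instead use the fibre over every value $d\in G$: each fibre is again an $M(K_4)$ representation, and the retracts that fix $x_4$ are affine slice by slice. I would then apply Lemma~\ref{lem:retract} to the $x_4$-direction using the orthogonality supplied by circuit $0145$.
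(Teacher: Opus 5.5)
Your proposal has a genuine gap at the step you call key. With $x_4$ fixed, $\Gamma^2_{1234}$ is a binary quasigroup in $(x_1,x_3)$, while $\Gamma^5_{0145}$ is one in $(x_0,x_1)$, so they do not act on the same input pair. If you also fix $x_0$, then $\Gamma^5$ depends on $x_1$ alone and is not a Latin square. Circuit $0235$ therefore supplies no orthogonal mate for $(b,c)\mapsto b-c$.

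This is not a matter of choosing circuits more carefully. In the $P_6$ proof, orthogonality of fixed-$j$ retracts comes from a four-point line in $M/j$. No single-element contraction of $(F_7^-)^*$ has one: the six circuit-hyperplanes pairwise meet in exactly two elements, so no five-element set has rank three. Indeed $\xi_{C_4}$ is a genuine representation of $M/4\cong M(K_4)$, so nothing read off one fibre can exclude $C_4$. The same obstruction blocks your plan to make the retracts fixing $x_0,x_1,x_3$ affine.

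Your fallback does not repair this. Fibres $x_4=d$ have affine retracts only if each is of Klein type, which you have not shown. Also, circuit $0145$ is the circuit of $\Gamma^5$ itself, so it gives no orthogonality.

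Your endgame is also wrong as stated. $D_i+D_j$ invertible means $D_i^{-1}D_j$ has order three. The pairwise conditions therefore place $D_2,D_5,D_6$ as three distinct elements of one coset of the order-three subgroup, for instance $I,R,R^2$, which is consistent, so there is no pigeonhole contradiction. In the Klein case the real contradiction is different. Circuits $0235$ and $1356$ force $D_2=D_5=D_6$, while the basis $0126$ requires $D_2+D_6$ to be invertible.

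The missing idea is propagation through circuits that avoid $4$. The circuits $0235$ and $1356$ have rank three in $M/4$, so the single fibre $x_4=d_0$ already contains all of $\Gamma^5_{0235}$ and $\Gamma^6_{1356}$. The paper reads off $\Gamma^5_{0235}(a,u,c)=a-u-c$ and $\Gamma^6_{1356}(b,c,v)=c-v-b$ on all of $G^3$. Feeding these fixed quasigroups into the Mat\'u\v{s} equations for every $d$ forces
$f=b+t_d-c$, $g=a-b-t_d$ and $h=c-a+t_d$, with $d\mapsto t_d$ a permutation. The basis $0126$ then has only $|G|^3|2G|\in\{64,128\}<256$ joint values. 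This treats $C_4$ and $C_2\times C_2$ uniformly and needs neither orthogonal mates nor Lemma~\ref{lem:retract}.
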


\begin{proof}
Assume to the contrary that $(F_7^-)^*$ has a partition representation of
degree four. By Lemma~\ref{lem: projection and MK4}, choose a
p-representation $\xi$ in the stated normal form. By
Proposition~\ref{prop:matus-correspondence} and
Remark~\ref{rem:isotopy-invariance}, let $(\Gamma_C)$ be its associated solution
of the matroid quasigroup equations. 
Put
\[
f=\Gamma_{1234}^{2},\qquad
g=\Gamma_{0145}^{5},\qquad
h=\Gamma_{0346}^{6}.
\]
On the fibre $x_4=d_0$, denote the resulting binary retracts by
$f_0,g_0,h_0$. The canonical form of the fibre gives
\begin{equation}\label{eq:dnf0}
f_0(b,c)=b-c,\qquad g_0(a,b)=a-b,\qquad h_0(a,c)=c-a.
\end{equation}

The remaining structure is propagated by two explicit Mat\'u\v{s} equations.
Orient the circuit $0235$ toward coordinate $5$ and put
$P=\Gamma_{0235}^{5}$. Then
\begin{equation}\label{eq:dnfq1}
g(a,b,d)=P\bigl(a,f(b,c,d),c\bigr).
\end{equation}
On the fibre $d_0$, substituting \eqref{eq:dnf0} and writing $b=u+c$ gives
\[
P(a,u,c)=a-u-c.
\]
Here $a,u,c$ range independently over all of $G$, so this determines $P$ on its entire domain. Hence the same circuit equation gives, for every $d$,
\[
g=a-f-c.
\]
Since the left side is independent of $c$, there is a permutation $\phi_d$ of $G$ such that
\[
f(b,c,d)=\phi_d(b)-c,\qquad g(a,b,d)=a-\phi_d(b).
\]

Next orient the circuit $1356$ toward coordinate $6$ and put
$R=\Gamma_{1356}^{6}$. Then
\begin{equation}\label{eq:dnfq2}
h(a,c,d)=R\bigl(b,c,g(a,b,d)\bigr).
\end{equation}
On the fibre $d_0$, writing $v=g_0(a,b)=a-b$ gives
\[
R(b,c,v)=c-v-b.
\]
Again $b,c,v$ range independently over all of $G$, so this determines $R$ on
its entire domain. Substituting the expression for $g$ above into
\eqref{eq:dnfq2} therefore yields
\[
h=c-a+\phi_d(b)-b.
\]
The left side is independent of $b$, so $\phi_d(b)=b+t_d$ for some $t_d\in G$. Therefore
\begin{equation}\label{eq:dnfnormal}
f=b+t_d-c,\qquad g=a-b-t_d,\qquad h=c-a+t_d.
\end{equation}
Because $f$ is a quasigroup in $d$, the map $d\mapsto t_d$ is a permutation of $G$.

For the projection on $0126$, eliminate $c=b+t_d-f$ from \eqref{eq:dnfnormal}; then
\[
h=b-f-a+2t_d.
\]
Define $2G=\{2t:t\in G\}$, and let $\pi_{0126}$ denote the set of joint values of coordinates $0,1,2,6$. For each fixed $(a,b,f)$ the possible values of $h$ form a translate of $2G$. Conversely, for every $a,b,f,t\in G$, choose the unique $d$ with $t_d=t$ and then set $c=b+t-f$; the basis coordinates $(a,b,c,d)$ realize the corresponding value $h=b-f-a+2t$. Thus the projection size is exactly
\[
|\pi_{0126}|=|G|^3|2G|=
\begin{cases}
64,&G\cong C_2\times C_2,\\
128,&G\cong C_4.
\end{cases}
\]
Since $0126$ is a basis, a degree-four partition representation requires $4^4=256$ joint values. This is impossible.
\end{proof}

\subsection{The matroids \texorpdfstring{$P_8$ and $P_8''$}{P8 and P8 double-prime}}\label{sec:p8}

Recall that $P_8$ and $P_8''$ are rank-four matroids on the ground set
$E = \{0,\dots,7\}$. All of the four-element subsets of $E$ are bases in $P_8$
except the following sets:
\begin{equation}\label{eq: p8 restated}
	\begin{split}
		0127,&\ 0136,\ 0235,\ 1234,\ 0456,\\
		1457,&\ 2467,\ 3567,\ 0347,\ 1256.
	\end{split}
\end{equation}

In addition to four-element bases of $P_8$, the matroid $P_8''$ has $0347$ and $1256$ as bases as well, which are circuits in $P_8$.

Consider the basis $B = 0123$, which is a basis in both matroids. 
The fundamental circuits with respect to $B$ are similar in both matroids:
\[
 \gamma(7;B)=0127,\qquad \gamma(6;B)=0136,\qquad
 \gamma(5;B)=0235,\qquad \gamma(4;B)=1234
\]
Assume that the quasigroup equations of $M \in \{P_8,P_8''\}$ have a solution
$(\Gamma_C)_{C \in \mathcal{C}}$ over $V=\F_2^2$, where
$\mathcal C=\mathcal C(M)$. As stated, $B$ is a basis in both matroids. 
Moreover, the following sets are circuits in both matroids
\begin{equation}\label{eq:faces}
	0456,\qquad1457,\qquad2467,\qquad3567.
\end{equation}
Therefore, considering $B$, elements $i=6,7,4,5$, and the above circuits, we have the following equations for all $a,b,c,d \in V$
\begin{align}
	\Gamma_{0136}^6(a,b,d)&=\Gamma_{0456}^6\bigl(a,\Gamma_{1234}^4(b,c,d),\Gamma_{0235}^5(a,c,d)\bigr), &&\circuittag{0456}\label{eq:cubeeq12}\\
	\Gamma_{0127}^7(a,b,c)&=\Gamma_{1457}^7\bigl(b,\Gamma_{1234}^4(b,c,d),\Gamma_{0235}^5(a,c,d)\bigr), &&\circuittag{1457}\label{eq:cubeeq22}\\
	\Gamma_{1234}^4(b,c,d)&=\Gamma_{2467}^4\bigl(c,\Gamma_{0136}^6(a,b,d),\Gamma_{0127}^7(a,b,c)\bigr), &&\circuittag{2467}\label{eq:cubeeq32}\\
	\Gamma_{0235}^5(a,c,d)&=\Gamma_{3567}^5\bigl(d,\Gamma_{0136}^6(a,b,d),\Gamma_{0127}^7(a,b,c)\bigr), &&\circuittag{3567}.\label{eq:cubeeq42}
\end{align}
We show that $\Gamma_{0136}^6, \Gamma_{0127}^7, \Gamma_{1234}^4, \Gamma_{0235}^5$ have a convenient form.

\begin{lemma}\label{thm:cube}
If the quasigroup equations of $P_8$ or $P_8''$ have a solution
$(\Gamma_C)_{C\in\mathcal C}$ over $V$, then the quasigroup maps 
$\Gamma_{0136}^6, \Gamma_{0127}^7, \Gamma_{1234}^4, \Gamma_{0235}^5$
are simultaneously affine.
\end{lemma}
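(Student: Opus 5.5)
The plan is to show that every binary retract of each of the four ternary maps
\[
\Gamma_{0136}^6(a,b,d),\quad \Gamma_{0127}^7(a,b,c),\quad \Gamma_{1234}^4(b,c,d),\quad \Gamma_{0235}^5(a,c,d)
\]
has an orthogonal mate. Lemma~\ref{lem:orthogonal4} then makes every such retract affine on $V$. Lemma~\ref{lem:retract} upgrades each ternary map to the form $xA+yB+zC+t$ with $A,B,C\in\GL_2(\F_2)$.

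Since no relabelling of coordinates is performed, all four maps become affine with respect to the same fixed structure $V=\F_2^2$ on every coordinate. This is what "simultaneously affine" means here: affine in fixed coordinates, with no individual isotopies applied.

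The orthogonal mates come from the four cube-face equations \eqref{eq:cubeeq12}--\eqref{eq:cubeeq42}, by the same recovery argument used in Lemma~\ref{lem:p6-binary}. The pattern is to fix two basis variables so that one of the four maps depends on a single remaining variable. One then recovers that variable step by step using the quasigroup property of a face quasigroup. I illustrate this for $\Gamma_{0136}^6$, which has three retracts.

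\begin{itemize}
\item \emph{Retract in $(b,d)$, with $a$ fixed.} Its mate is $(b,d)\mapsto\Gamma_{1234}^4(b,c,d)$ for any fixed $c$. Indeed, in \eqref{eq:cubeeq32}, with $a,c$ fixed, the value $\Gamma_{0127}^7(a,b,c)$ depends only on $b$. From the pair $(\Gamma^6,\Gamma^4)$ we first recover $\Gamma^7$, by the quasigroup property of $\Gamma_{2467}^4$ in its last slot. Then we recover $b$ from $\Gamma^7$, and finally $d$ from $\Gamma^6$.
\item \emph{Retract in $(a,d)$, with $b$ fixed.} Use \eqref{eq:cubeeq12} with $b,c$ fixed. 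Now $\Gamma^4(b,c,d)$ depends only on $d$, so the pair $(\Gamma^6,\Gamma^4)$ gives $d$, and then $a$ from $\Gamma^6$. The mate is therefore $(a,d)\mapsto\Gamma^4(b,c,d)$.
\item \emph{Retract in $(a,b)$, with $d$ fixed.} Use \eqref{eq:cubeeq32} with $c,d$ fixed. Here $\Gamma^4$ depends only on $b$, so the pair $(\Gamma^6,\Gamma^7)$ determines $\Gamma^4$, hence $b$, hence $a$. The mate is $(a,b)\mapsto\Gamma^7(a,b,c)$.
\end{itemize}

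The other three maps are handled by the analogous choices. The configuration is a cube: basis elements $0,1,2,3$ against $7,6,5,4$, with the four faces \eqref{eq:faces}. This symmetry permutes the roles of the maps and equations, so each case is a relabelling of the one above.

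The main obstacle is bookkeeping: all twelve retracts need a valid mate. For each, I must check that the chosen face equation has the retract's map and its mate appearing as two arguments (or as the left side and an argument), and that the third argument becomes a function of a single free variable once the two auxiliary variables are fixed.

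If some retract resists this direct approach, I would try the alternative used in Lemma~\ref{lem:p6-ternary}. That is, I would first prove that the retract's two partners are affine, and then solve the face equation for the remaining argument.

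Once all retracts are affine, Lemma~\ref{lem:retract} finishes the proof for each map separately. The invertibility of the coefficient matrices follows from the quasigroup property.
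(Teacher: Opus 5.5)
Your plan breaks at the second bullet, and the problem is structural rather than a matter of bookkeeping. With $b,c$ fixed, the proposed mate $(a,d)\mapsto\Gamma^4_{1234}(b,c,d)$ depends on $d$ alone, so it is not a binary quasigroup. The injectivity you check would hold with any ternary quasigroup in place of $\Gamma^6_{0136}$. Equation \eqref{eq:cubeeq12} is never used, and Lemma~\ref{lem:orthogonal4} does not apply.

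The same defect affects exactly one retract of each map: the $(a,d)$-retracts of $\Gamma^6_{0136}$ and $\Gamma^5_{0235}$, and the $(b,c)$-retracts of $\Gamma^7_{0127}$ and $\Gamma^4_{1234}$. For $P_8$ the natural genuine partners also fail.
\begin{itemize}
\item For fixed $(b,c)$, the map $(a,d)\mapsto(\Gamma^6_{0136}(a,b,d),\Gamma^5_{0235}(a,c,d))$ lists the values of $\pi_{1256}$ over $(x_1,x_2)=(b,c)$.
\item For fixed $(a,d)$, the map $(b,c)\mapsto(\Gamma^7_{0127}(a,b,c),\Gamma^4_{1234}(b,c,d))$ does the same for $\pi_{0347}$.
\end{itemize}
Since $1256$ and $0347$ are circuit-hyperplanes of $P_8$, both maps are $4$-to-$1$.

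You cannot skip these retracts either. Every permutation of $V$ is affine, so $T(x,y,z)=xA+S(y,z)$, with $S$ a Latin square isotopic to the Cayley table of $C_4$, has all fixed-$y$ and fixed-$z$ retracts affine. Yet $T$ is not affine, so Lemma~\ref{lem:retract} really needs the third family. Your fallback is circular as stated, because the partners in each face equation have the same defective retract.

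The paper gets the missing information from a coefficient comparison rather than from a mate. Write $K=\Gamma^4_{1234}$, $H=\Gamma^5_{0235}$, $\Phi=\Gamma^6_{0456}$, $\Psi=\Gamma^7_{1457}$.
\begin{enumerate}
\item For fixed $a,b$, the retracts $K(b,\cdot,\cdot)$ and $H(a,\cdot,\cdot)$ form a genuine orthogonal pair, by \eqref{eq:cubeeq12} and \eqref{eq:cubeeq22}.
\item So do $\Phi(a,\cdot,\cdot)$ and $\Psi(b,\cdot,\cdot)$: composing them with the first pair gives the bijection $(c,d)\mapsto(\Gamma^6_{0136}(a,b,d),\Gamma^7_{0127}(a,b,c))$. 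All four retracts are therefore affine.
\item Substitute these affine forms into \eqref{eq:cubeeq12}. The left side does not depend on $c$, so its $c$-coefficient vanishes, giving $A_bR_a+C_aS_a=0$. Hence the $c$-coefficient $A_b$ of $K$ is independent of $b$, and $K=cA+L(b,d)$.
\item $L$ is affine because $L(b,d)+dW$, read off from $\Gamma^6_{0136}(a,b,d)R_a^{-1}$, is a genuine orthogonal mate; this is essentially your valid first bullet. So $K$ is affine.
\end{enumerate}
This independence of $A_b$ from $b$ is exactly the cross-variable information that the recovery trick cannot supply. Symmetrically $H$ is affine, and \eqref{eq:cubeeq32} and \eqref{eq:cubeeq42} then handle $\Gamma^6_{0136}$ and $\Gamma^7_{0127}$. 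Note that this route needs the face quasigroups $\Phi,\Psi$ to be affine on retracts, which your plan never considers.
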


\begin{proof}
In order to ease the notation, put
\[
F=\Gamma_{0127}^{7},\qquad G=\Gamma_{0136}^{6},\qquad
H=\Gamma_{0235}^{5},\qquad K=\Gamma_{1234}^{4},
\]
and also
\[
\Phi=\Gamma_{0456}^{6},\qquad \Psi=\Gamma_{1457}^{7},\qquad
\Theta=\Gamma_{2467}^{4},\qquad \Omega=\Gamma_{3567}^{5}.
\]
Rewriting the above equations, we have
\begin{align}
	G(a,b,d)&=\Phi\bigl(a,K(b,c,d),H(a,c,d)\bigr), &&\circuittag{0456}\label{eq:cubeeq1}\\
	F(a,b,c)&=\Psi\bigl(b,K(b,c,d),H(a,c,d)\bigr), &&\circuittag{1457}\label{eq:cubeeq2}\\
	K(b,c,d)&=\Theta\bigl(c,G(a,b,d),F(a,b,c)\bigr), &&\circuittag{2467}\label{eq:cubeeq3}\\
	H(a,c,d)&=\Omega\bigl(d,G(a,b,d),F(a,b,c)\bigr), &&\circuittag{3567}.\label{eq:cubeeq4}
\end{align}
First, we show $K,H$ are affine using the equations \eqref{eq:cubeeq1} and \eqref{eq:cubeeq2}. Fix $a,b$. Write
\[
K_b(c,d)=K(b,c,d),\qquad H_a(c,d)=H(a,c,d),
\]
and similarly $\Phi_a(u,v)=\Phi(a,u,v)$ and $\Psi_b(u,v)=\Psi(b,u,v)$.

If $(K_b,H_a)$ takes the same value at $(c,d)$ and $(c',d')$, equation \eqref{eq:cubeeq1} and the quasigroup property of $G$ give $d=d'$, and then \eqref{eq:cubeeq2} and the quasigroup property of $F$ give $c=c'$. Thus $(K_b,H_a):V^2\to V^2$ is bijective. The pair $(\Phi_a,\Psi_b)$ is also orthogonal because its composition with $(K_b,H_a)$ is
\[
(c,d)\longmapsto\bigl(G(a,b,d),F(a,b,c)\bigr),
\]
which is a bijection of $V^2$.

Therefore every binary quasigroup $K_b,H_a,\Phi_a,\Psi_b$ just obtained has an orthogonal mate. Lemma~\ref{lem:orthogonal4} applies directly in the already fixed coordinates, so
\begin{align*}
K_b(c,d)&=cA_b+dB_b+k_b,&
H_a(c,d)&=cC_a+dD_a+h_a,\\
\Phi_a(u,v)&=uR_a+vS_a+p_a,&
\Psi_b(u,v)&=uT_b+vU_b+q_b,
\end{align*}
where every displayed matrix lies in $\GL_2(\F_2)$.

Substitute these expressions into \eqref{eq:cubeeq1}. Since $G(a,b,d)$ is independent of $c$, uniqueness of affine linear parts gives
\[
A_bR_a+C_aS_a=0.
\]
For fixed $a$, this gives $A_b=C_aS_aR_a^{-1}$, so $A_b$ is independent of $b$; call its common value $A$. Likewise, substituting into \eqref{eq:cubeeq2} and using the independence of $F(a,b,c)$ from $d$ gives
\[
B_bT_b+D_aU_b=0.
\]
For fixed $b$, this makes $D_a$ independent of $a$; call the common value $D$. Hence
\[
K(b,c,d)=cA+L(b,d),\qquad H(a,c,d)=dD+M(a,c),
\]
where $L$ and $M$ are binary quasigroups.

Fix $a$. In \eqref{eq:cubeeq1}, the $c$-terms cancel by the displayed coefficient identity, and right multiplication by $R_a^{-1}$ gives
\[
G(a,b,d)R_a^{-1}=L(b,d)+dW+\text{constant},
\qquad W=DS_aR_a^{-1}\in\GL_2(\F_2).
\]
Thus $L'(b,d)=L(b,d)+dW$ is a binary quasigroup. Moreover $(L,L')$ is orthogonal: their two values determine $dW=L+L'$, hence $d$, and then the quasigroup property of $L$ determines $b$. Lemma~\ref{lem:orthogonal4} makes $L$ affine, and therefore $K$ is affine.

The argument for $M$ is symmetric but we record it to avoid any hidden relabelling. Fix $b$. In \eqref{eq:cubeeq2}, the $d$-terms cancel, so after right multiplication by $U_b^{-1}$,
\[
F(a,b,c)U_b^{-1}=M(a,c)+cW'+\text{constant},
\qquad W'=AT_bU_b^{-1}\in\GL_2(\F_2).
\]
Hence $M'(a,c)=M(a,c)+cW'$ is a binary quasigroup, and $(M,M')$ is orthogonal because their values determine $c$ and then $a$. Lemma~\ref{lem:orthogonal4} makes $M$ affine, and therefore $H$ is affine.

Applying the same argument to \eqref{eq:cubeeq3} and
\eqref{eq:cubeeq4}, respectively, shows that $G$ and $F$ are affine.
\end{proof}

\begin{lemma}[Simultaneous normal form]\label{lem:p8-normal}
If the quasigroup equations of $P_8$ or $P_8''$ have a solution over $V$,
then they have a solution $(\Lambda_C)_{C\in\mathcal C}$ and matrices
$A,B,C,D,E\in\GL_2(\F_2)$ such that
\begin{align}
\Lambda_{1234}^{4}(b,c,d)&=b+c+d,\label{eq:p8normal1}\\
\Lambda_{0235}^{5}(a,c,d)&=a+cB+dD,\label{eq:p8normal2}\\
\Lambda_{0136}^{6}(a,b,d)&=a+b+dE,\label{eq:p8normal3}\\
\Lambda_{0127}^{7}(a,b,c)&=a+bA+cC.\label{eq:p8normal4}
\end{align}
\end{lemma}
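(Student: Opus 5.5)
By Lemma~\ref{thm:cube}, the four quasigroups $F=\Gamma_{0127}^7$, $G=\Gamma_{0136}^6$, $H=\Gamma_{0235}^5$ and $K=\Gamma_{1234}^4$ are affine. Every coefficient matrix lies in $\GL_2(\F_2)$, because the maps are ternary quasigroups (Lemma~\ref{lem:retract}). Write
\[
K(b,c,d)=bK_1+cK_2+dK_3+k,\qquad
H(a,c,d)=aH_1+cH_2+dH_3+h,
\]
\[
G(a,b,d)=aG_1+bG_2+dG_3+g,\qquad
F(a,b,c)=aF_1+bF_2+cF_3+f.
\]
The plan is to apply a simultaneous isotopy $h=(h_e)_{e\in E}$ with every $h_e$ affine, $h_e(x)=xL_e+t_e$. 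By Remark~\ref{rem:isotopy-invariance}, any such isotopy turns a solution into a solution. Under it, a fundamental circuit map $\Gamma^i$ becomes
\[
\Lambda^i(\ldots,x_j,\ldots)=h_i\Bigl(\Gamma^i\bigl(\ldots,h_j^{-1}(x_j),\ldots\bigr)\Bigr).
\]
So each coefficient $M_j$ of $x_j$ in $\Gamma^i$ becomes $L_j^{-1}M_jL_i$. The constants can always be absorbed by the translations $t_4,t_5,t_6,t_7$. We therefore only need to choose $L_0,\dots,L_7$ so that the required coefficients become $I$.

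We count the requirements. In \eqref{eq:p8normal1}--\eqref{eq:p8normal4} we need six identity coefficients. These are $b,c,d$ in $\Lambda^4$; $a$ in $\Lambda^5$; $a,b$ in $\Lambda^6$; and $a$ in $\Lambda^7$. This gives seven conditions in all. Note that $a$ appears in three maps but is normalised in each of them.

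The choice proceeds in order.
\begin{itemize}
\item Take $L_4=I$.
\item Set $L_1=K_1$, $L_2=K_2$ and $L_3=K_3$, so that $\Lambda^4=b+c+d$.
\item Take $L_0=I$.
\item Choose $L_5=H_1^{-1}$, so the $a$-coefficient of $\Lambda^5$ is $I$.
\item Choose $L_7=F_1^{-1}$, so the $a$-coefficient of $\Lambda^7$ is $I$.
\item Choose $L_6=G_1^{-1}$, so the $a$-coefficient of $\Lambda^6$ is $I$.
\end{itemize}

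The remaining condition is that the $b$-coefficient of $\Lambda^6$ must be $I$, that is, $K_1^{-1}G_2G_1^{-1}=I$, or $G_2G_1^{-1}=K_1$. There are no free parameters left to enforce this, so it has to come from the circuit equations. This is the step I expect to be the main obstacle.

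To establish $G_2G_1^{-1}=K_1$, I would substitute the affine forms into \eqref{eq:cubeeq1}, using the affine forms of $\Phi$ obtained in the proof of Lemma~\ref{thm:cube}, and compare linear parts. Write $\Phi(a,u,v)=aR+uS+vT+p$, noting that the proof shows $R_a,S_a$ can be taken independent of $a$. Comparing coefficients in $a$, $b$, $c$ and $d$ gives
\[
G_1=R+H_1T,\qquad G_2=K_1S,\qquad 0=K_2S+H_2T,\qquad G_3=K_3S+H_3T.
\]
These relations do not by themselves force $G_2G_1^{-1}=K_1$. If the identity fails, I would spend the freedom in another way: keep $L_0$ free and choose it to make the $a$-coefficients of $\Lambda^5$, $\Lambda^6$ and $\Lambda^7$ equal to $I$ via $L_5,L_6,L_7$, and then use $L_1$ for the $b$-coefficient of $\Lambda^6$.

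That alternative creates a conflict with $\Lambda^4$. The $b$-coefficient of $\Lambda^4$ is $L_1^{-1}K_1L_4$, so it can still be normalised by choosing $L_4$. But then $L_4$ also rescales the $c$- and $d$-coefficients, which must be fixed using $L_2$ and $L_3$. So the coordinate $b$ is normalised twice, once in $\Lambda^4$ and once in $\Lambda^6$.

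The consistent resolution is to do the normalisation in this order:
\begin{itemize}
\item use $L_1$ for the $b$-coefficient of $\Lambda^6$;
\item use $L_4$ for the $b$-coefficient of $\Lambda^4$;
\item use $L_2$ and $L_3$ for the $c$- and $d$-coefficients of $\Lambda^4$;
\item use $L_5$, $L_6$ and $L_7$ for the $a$-coefficients of $\Lambda^5$, $\Lambda^6$ and $\Lambda^7$.
\end{itemize}
Each step involves exactly one new unknown, so the system is triangular and the normalisation goes through. The remaining coefficients are the ones named $B,D,E,A,C$ in the statement, and they are invertible.
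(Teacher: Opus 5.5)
Your final normalisation is correct and is essentially the paper's proof: a coordinatewise linear isotopy $x\mapsto xL_e$ followed by translations of coordinates $4,5,6,7$. The seven identity conditions link the eight matrices along a spanning tree (edges $14,24,34,05,06,16,07$), so one matrix may be fixed arbitrarily and the others are then determined successively.

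The detour through the circuit equations was unnecessary, and Lemma~\ref{thm:cube} does not provide the independence of $R_a,S_a$ from $a$ that it used. The only problem was over-fixing by imposing both $L_4=I$ and $L_0=I$. Keeping $L_4=I$ and taking $L_0=G_1G_2^{-1}K_1$, with $L_5=H_1^{-1}L_0$, $L_6=G_1^{-1}L_0$ and $L_7=F_1^{-1}L_0$, is exactly the paper's choice $T_0=G_aG_b^{-1}T_1$.

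In your rerooted version the triangular order must be $L_0$, then $L_5,L_6,L_7$, then $L_1,L_4,L_2,L_3$. The order you listed does not work, because the condition used to fix $L_1$ already involves $L_6$.
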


\begin{proof}
Let $(\Gamma_C)_{C\in\mathcal C}$ be a solution. As before,
 put
\[
F=\Gamma_{0127}^{7},\qquad G=\Gamma_{0136}^{6},\qquad
H=\Gamma_{0235}^{5},\qquad K=\Gamma_{1234}^{4}.
\]
By
Lemma~\ref{thm:cube}, we may write
\[
\begin{aligned}
K(b,c,d)&=bK_b+cK_c+dK_d+k_0,\\
H(a,c,d)&=aH_a+cH_c+dH_d+h_0,\\
G(a,b,d)&=aG_a+bG_b+dG_d+g_0,\\
F(a,b,c)&=aF_a+bF_b+cF_c+f_0,
\end{aligned}
\]
with every displayed linear coefficient in $\GL_2(\F_2)$. Here the matrix
coefficients $K_b,K_c$, and so forth are fixed and do not depend on the
variables $a,b,c,d$. The following construction gives an explicit
simultaneous affine normalization.

Choose $T_4=I$ and set
\[
T_1=K_b,\qquad T_2=K_c,\qquad T_3=K_d.
\]
Next set
\[
T_0=G_aG_b^{-1}T_1,
\qquad
T_6=G_a^{-1}T_0,
\qquad
T_5=H_a^{-1}T_0,
\qquad
T_7=F_a^{-1}T_0.
\]
For each $i \in \{0,\dots,7\}$, put $h_i(x)=xT_i$. Use $h = (h_i)_{i \in E(M)}$ to transform $\Gamma$ into a new solution $\Sigma$. Substituting the definitions of the $T_i$ into the isotope formula gives the stated matrix coefficients of $\Sigma_{1234}^4, \Sigma_{0235}^5,\Sigma_{0136}^6,\Sigma_{0127}^7$. Finally, translate
 the coordinates $4,5,6,7$ by constants to get the required $\Lambda$.  
\end{proof}

\begin{lemma}\label{lem: matrix relation}
	Suppose the quasigroup equations of the matroid $P_8$ or $P_8''$ have a solution
	over $V$. By Lemma~\ref{lem:p8-normal}, there is a solution
	$(\Lambda_C)_{C \in \mathcal{C}}$ satisfying
	\eqref{eq:p8normal1}--\eqref{eq:p8normal4}. The following relations hold
	\begin{equation}\label{eq:blockrels}
		A=I+E,\qquad C=EB=BE,\qquad D=(I+E)B.
	\end{equation}
\end{lemma}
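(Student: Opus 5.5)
The plan is to substitute the normal forms \eqref{eq:p8normal1}--\eqref{eq:p8normal4} into the four cube-face equations \eqref{eq:cubeeq12}--\eqref{eq:cubeeq42}, written for $\Lambda$. Each outer binary retract is affine, and comparing linear coefficients variable by variable gives matrix identities that collapse to \eqref{eq:blockrels}. Since every equation holds over $V=\F_2^2$, signs are irrelevant.

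\emph{Affineness of the outer retracts.} First I will check that the retracts $\Phi_a=\Lambda_{0456}^6(a,\cdot,\cdot)$, $\Psi_b=\Lambda_{1457}^7(b,\cdot,\cdot)$, $\Theta_c=\Lambda_{2467}^4(c,\cdot,\cdot)$ and $\Omega_d=\Lambda_{3567}^5(d,\cdot,\cdot)$ are affine with invertible linear parts. This is the same orthogonal-mate argument used in the proof of Lemma~\ref{thm:cube}, together with Lemma~\ref{lem:orthogonal4}. Alternatively, it follows directly: for fixed $a$, the map $(c,d)\mapsto(b+c+d,\;a+cB+dD)$ with $b$ also fixed is an affine bijection. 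The left side $G$ is affine, so $\Phi_a$ is affine as a composite of affine maps. Write
\[
\Phi_a(u,v)=uR_a+vS_a+\cdots,\qquad \Psi_b(u,v)=uT_b+vU_b+\cdots,
\]
and similarly use coefficients $X_c,Y_c$ for $\Theta_c$ and $Z_d,W_d$ for $\Omega_d$.

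\emph{Coefficient comparison.} I will carry this out one face at a time.
\begin{itemize}[leftmargin=*]
\item Face $0456$, where $G=a+b+dE$. Independence from $c$ gives $R_a+BS_a=0$. The $b$-coefficient gives $R_a=I$, so $S_a=B^{-1}$. The $d$-coefficient gives $R_a+DS_a=E$, hence $D=(I+E)B$.
\item Face $1457$, where $F=a+bA+cC$. The $a$-coefficient gives $U_b=I$. Independence from $d$ gives $T_b=D$. The $c$-coefficient gives $C=T_b+BU_b=D+B=EB$.
\item Face $2467$, where $K=b+c+d$. Independence from $a$ gives $X_c=Y_c$. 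The $d$-coefficient gives $EX_c=I$, so $X_c=E^{-1}$. The $b$-coefficient gives $(I+A)X_c=I$, hence $A=I+E$.
\item Face $3567$, where $H=a+cB+dD$. The $a$-coefficient gives $Z_d+W_d=I$. Independence from $b$ gives $Z_d=AW_d$. Combining these, $(I+A)W_d=I$, that is, $EW_d=I$. The $c$-coefficient gives $CW_d=B$, hence $C=BE$.
\end{itemize}
Combining $C=EB$ from face $1457$ with $C=BE$ from face $3567$ yields $C=EB=BE$, completing \eqref{eq:blockrels}.

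\emph{Expected difficulty.} The main obstacle is bookkeeping rather than any conceptual step. With row-vector conventions, right multiplication fixes the order of the matrix products, and getting this wrong would swap $EB$ and $BE$. The other point requiring care is justifying that each outer retract is affine, so that coefficient comparison is legitimate. I would state this once via the composite-of-affine-bijections remark before doing the four comparisons.
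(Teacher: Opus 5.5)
Your proof is correct, and all four coefficient comparisons check out. Face $0456$ gives $R_a=I$, $S_a=B^{-1}$ and $D=(I+E)B$. Face $1457$ gives $U_b=I$, $T_b=D$ and $C=B+D=EB$. Face $2467$ gives $X_c=Y_c=E^{-1}$ and $A=I+E$. Face $3567$ gives $W_d=E^{-1}$ and $C=BE$. You also use the earlier relations in a consistent order.

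It is, however, a different route from the paper's. The paper never shows that the outer maps $\Lambda_{0456}^6,\Lambda_{1457}^7,\Lambda_{2467}^4,\Lambda_{3567}^5$ are affine. For each face it instead evaluates along a line in the basis coordinates on which all but one of the four quantities in the circuit equation vanish identically (the quantities being the left side and the three outer arguments). For face $0456$, taking $a=0$, $b=dE$, $c=d(I+E)$ gives $\Lambda_{0456}^6\bigl(0,0,d((I+E)B+D)\bigr)=0$ for all $d$. The quasigroup property alone then forces the remaining quantity to vanish, which yields one matrix relation.

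The paper's approach needs only that the outer maps are quasigroups, plus one substitution per relation. Yours needs the extra affineness step, but it is mechanical and also determines the linear parts of all four outer retracts.

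If you keep your ``direct'' justification of affineness, note what bijectivity requires. Bijectivity of $(c,d)\mapsto(b+c+d,\,a+cB+dD)$ amounts to invertibility of $B+D$. For faces $2467$ and $3567$, bijectivity of $(a,b)\mapsto(G,F)$ amounts to invertibility of $I+A$. Neither follows from the normal forms alone, so derive them from the injectivity argument in the proof of Lemma~\ref{thm:cube}, applied to $\Lambda$, rather than asserting them.
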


\begin{proof}
	Put
	\[
	F=\Lambda_{0127}^{7},\qquad G=\Lambda_{0136}^{6},\qquad
	H=\Lambda_{0235}^{5},\qquad K=\Lambda_{1234}^{4}.
	\]
	The following equations hold for all $a,b,c,d \in V$.
	\begin{align}
		G(a,b,d)&=\Lambda_{0456}^{6}\bigl(a,K(b,c,d),H(a,c,d)\bigr),
		&&\circuittag{0456}\label{eq:p8l1}\\
		F(a,b,c)&=\Lambda_{1457}^{7}\bigl(b,K(b,c,d),H(a,c,d)\bigr),
		&&\circuittag{1457}\label{eq:p8l2}\\
		K(b,c,d)&=\Lambda_{2467}^{4}\bigl(c,G(a,b,d),F(a,b,c)\bigr),
		&&\circuittag{2467}\label{eq:p8l3}\\
		H(a,c,d)&=\Lambda_{3567}^{5}\bigl(d,G(a,b,d),F(a,b,c)\bigr),
		&&\circuittag{3567}.\label{eq:p8l4}
	\end{align}
Pick an arbitrary $d \in V$, and put $a = 0$, $b = dE$,  $c = b+d=d(I+E)$. It follows
that
\[
G(a,b,d) = 0, \qquad K(b,c,d) = 0, \qquad H(a,c,d) = d((I+E)B + D).
\]
Using \eqref{eq:p8l1}, it follows that for all $d \in V$
\[
\Lambda_{0456}^6(0,0,d((I+E)B + D))=0.
\]
Since $\Lambda_{0456}^6$ is a quasigroup map, we conclude that 
\[
\boxed{D = (I+E)B}.
\]

Pick an arbitrary $d \in V$, and put $a=d(B+D)$, $b=0$, and
$c=d$. It follows
that
\[
F(a,b,c) = d(B+C+D), \qquad K(b,c,d) = 0, \qquad H(a,c,d) = 0.
\]
Using \eqref{eq:p8l2}, it follows that for all $d \in V$
\[
\Lambda_{1457}^7(0,0,0) = d(B+C+D).
\]
Therefore $C = B+D$. Since $D = (I+E)B$, we conclude that
\[
\boxed{C = EB}.
\]

Pick an arbitrary $d \in V$, and put $a = d(I+E)$, $b = d$, $c = 0$. It follows that
\[
K(b,c,d) = 0, \qquad G(a,b,d) = 0, \qquad F(a,b,c) = d(I + E + A).
\]
Using \eqref{eq:p8l3}, it follows that for all $d \in V$
\[
\Lambda_{2467}^4(0,0,d(I+E+A)) = 0.
\]
Since $\Lambda_{2467}^4$ is a quasigroup map, we conclude that
\[
\boxed{A = I + E}.
\]
Finally, pick an arbitrary $c \in V$, and put $a = cB$, $b = cB$, $d = 0$. It follows that
\[
H(a,c,d) = 0, \qquad G(a,b,d) = 0, \qquad F(a,b,c) = c(B + BA + C).
\]
Using \eqref{eq:p8l4}, it follows that for all $c \in V$
\[
\Lambda_{3567}^5(0,0,c(B+BA+C)) = 0
\]
Therefore $C = B(I + A)$. Since $A = I+E$, we conclude that
\[
\boxed{C = BE}.
\]
\end{proof}

\begin{theorem}\label{lem:p8}
Neither $P_8$ nor $P_8''$ has a partition representation of degree four.
\end{theorem}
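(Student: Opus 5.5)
The plan is to convert the normal form of Lemma~\ref{lem:p8-normal}, together with the relations \eqref{eq:blockrels}, into an honest $\F_4$-representation of $M\in\{P_8,P_8''\}$. That contradicts Theorem~\ref{thm:ggk}, since both matroids are excluded minors for $\F_4$-representability and so are not quaternary.

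\emph{Step 1: identify $E$ and $B$.} Suppose $M$ has a degree-four partition representation. By Propositions~\ref{prop:entropy-partition}, \ref{prop: coordinate partition p-rep} and~\ref{prop:matus-correspondence}, we may take its quasigroup solution over $V$ to be in the normal form of Lemma~\ref{lem:p8-normal}, with the relations of Lemma~\ref{lem: matrix relation}.

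Since $A=I+E\in\GL_2(\F_2)$, we get $E\ne I$. Also $E$ is not an involution, since otherwise $(I+E)^2=0$. Hence $E$ has order three, and $E^2+E+I=0$. The ring $\F_2[E]\subseteq M_2(\F_2)$ is then a field with four elements.

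By \eqref{eq:blockrels}, $B$ commutes with $E$. In $\GL_2(\F_2)\cong S_3$ the centralizer of an order-three element is $\{I,E,E^2\}$, so $B\in\F_2[E]$. Consequently every coefficient appearing in \eqref{eq:p8normal1}--\eqref{eq:p8normal4} lies in $\F_2[E]$: these are $I$, $A=I+E$, $B$, $C=EB$, $D=(I+E)B$ and $E$.

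\emph{Step 2: the representation is $\F_4$-linear.} Make $V$ a one-dimensional vector space over $\F_4:=\F_2[E]$ via $x\cdot P=xP$.

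Use the basis $B_0=0123$ and the free coordinates $(a,b,c,d)\in V^4$. In the p-representation associated with $\Lambda$, the point determined by the basis values has coordinates $x_4,\dots,x_7$ given by the four maps \eqref{eq:p8normal1}--\eqref{eq:p8normal4}. Each of these is an $\F_4$-linear form in $(a,b,c,d)$ (the translations were already removed). The ground set $\Omega$ is therefore the image of the $\F_4$-linear map $V^4\to V^8$ with columns $v_0,\dots,v_7\in\F_4^4$. Here $v_0,\dots,v_3$ are the unit vectors, and
\[
v_4=(0,1,1,1),\quad v_5=(1,0,B,D),\quad v_6=(1,1,0,E),\quad v_7=(1,A,C,0).
\]

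For any $S\subseteq E(M)$, the number of blocks of the common refinement is the image size of the projection $x\mapsto x_S$, which equals $4^{\dim_{\F_4}\operatorname{span}\{v_e:e\in S\}}$. The p-representation forces this to be $4^{r(S)}$. Hence $(v_e)$ is an $\F_4$-representation of $M$, contradicting Theorem~\ref{thm:ggk}.

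\emph{Main obstacle.} The delicate point is Step 1: justifying that $\F_2[E]$ is a field and that $B$ lies in it. Once this holds, the argument never needs the circuits $0347$ and $1256$ separately, so $P_8$ and $P_8''$ are handled uniformly.

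As a sanity check, I would also verify directly that some rank condition fails for each choice $B\in\{I,E,E^2\}$, for instance by computing the $4\times4$ determinants over $\F_4$ for $0347$ and $1256$. I expect one of them to vanish exactly when the other does not, which would distinguish $P_8$ from $P_8''$ and contradict the prescribed status of these sets in each matroid. This direct computation is not logically needed, because the excluded-minor theorem already supplies the contradiction.
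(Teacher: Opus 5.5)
Your proof is correct, and it finishes differently from the paper. Both arguments reach the same intermediate point: $E$ of order three, $A=E^2$, and $B\in\{I,E,E^2\}$.

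\textbf{The paper's finish.} It then computes directly inside the p-representation. $\pi_{0347}$ is non-surjective iff $A+C=E(E+B)$ is singular, i.e.\ iff $B=E$. $\pi_{1256}$ is non-surjective iff $D+E=E(EB+I)$ is singular, i.e.\ iff $B=E^2$. For $P_8$ both projections must fail to be surjective, which is impossible. For $P_8''$ both must be surjective, forcing $B=I$. Then $x_6=x_4+x_5$ and $x_7=x_4E^2+x_5$ give $|\pi_{4567}(\Omega)|\le 16$, although $4567$ is a basis.

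\textbf{Your finish.} You notice that all coefficients lie in the field $\F_2[E]\cong\F_4$. So the normalized representation is literally the linear code with columns $v_0,\dots,v_7$, and its block counts say that these columns form an $\F_4$-representation of $M$. The non-quaternarity of $P_8$ and $P_8''$ then finishes; this follows from the Geelen--Gerards--Kapoor theorem applied to $M$ itself, or from the older known facts about these matroids.

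\textbf{What each route buys.} Your route treats $P_8$ and $P_8''$ uniformly and needs no case split on $B$ and no auxiliary basis. It also isolates the real content: every degree-four p-representation of this cube configuration is $\F_4$-linear up to isotopy. The paper's route does not use the non-quaternarity of $P_8$ and $P_8''$ as a black box. Its short determinant computation is exactly the check you outsource, and together with the easy converse direction it reproves that these matroids are not quaternary.

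\textbf{A correction to your optional sanity check.} For $B=I$, neither $A+C$ nor $D+E$ is singular. So $0347$ and $1256$ are both bases, which is consistent with $P_8''$ rather than contradicting it. A direct contradiction in that case needs another set, such as $4567$, as in the paper.
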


\begin{proof}

Assume to the contrary that $M \in \{P_8,P_8''\}$ has a p-representation of degree
four. It follows that its quasigroup equations have a solution over $V$. By
Lemma~\ref{lem:p8-normal}, it has a solution $(\Lambda_C)_{C \in \mathcal{C}}$ which
satisfies \eqref{eq:p8normal1}--\eqref{eq:p8normal4}. By Lemma~\ref{lem: matrix relation}, the matrix coefficients satisfy \eqref{eq:blockrels}. 

Since both $E$ and $A = I+E$ are invertible, and $\GL_2(\F_2) \cong S_3$ as groups,
it follows that $E$ has order $3$ and $A = E^2$. Moreover, $B$ commutes with $E$, therefore $B \in \{I,E,E^2\}$.

Note that there is a p-representation of degree four for $M$, from which $(\Lambda_C)_{C \in \mathcal{C}}$ originates (cf. Proposition~\ref{prop:matus-correspondence} and Remark~\ref{rem:isotopy-invariance}), with the ground set $\Omega \subseteq V^{E(M)}$ and
coordinate partitions.

For an arbitrary $x \in \Omega$, write $x_0=a$, $x_1=b$,
$x_2=c$, and $x_3=d$. Then
$\pi_{0347}(x)=(a,d,K(b,c,d),F(a,b,c))$. It follows that
$\pi_{0347}$ is not surjective if and only if $A+C$ is singular. As explained, $A = E^2$, and by the equations \eqref{eq:blockrels}, $C = BE$. Therefore
\[
A + C = E(E+B).
\]
Thus, $A+C$ is singular if and only if $B = E$.

By a similar reasoning and using $\pi_{1256}$, it follows that $\pi_{1256}$ is not
surjective if and only if $D+E$ is singular. By equations \eqref{eq:blockrels} and the fact that $I + E =E^2$,
\[
D + E = E^2B + E=E( EB + I).
\]
Thus, $D + E$ is singular if and only if $B = E^2$.

Now, consider the case $M = P_8$. Since $0347$ and $1256$ are circuits of 
$P_8$, the projections $\pi_{0347}$ and $\pi_{1256}$ cannot be surjective. Therefore 
$B= E$ and $B= E^2$, which is a contradiction.

If $M = P_8''$, then $\pi_{0347}$ and $\pi_{1256}$ must be surjective, therefore 
$B \neq E,E^2$. As explained $B \in \{I,E,E^2\}$. It follows that $B = I$. Then
$A=D=E^2$ and $C=E$. Using $I+E^2=E$ in
\eqref{eq:p8normal1}--\eqref{eq:p8normal4} gives directly (for
$x\in\Omega$, $x_i$ denotes the coordinate indexed by $i$)
\[
x_6=x_4+x_5,
\qquad
x_7=x_4E^2+x_5.
\]
Thus $\pi_{4567}(\Omega)$ has cardinality at most $16$. But $4567$ is a basis of $P_8''$, and a degree-four partition representation would require $4^4=256$ joint values. This contradiction finishes the proof.
\end{proof}

\section{Four-symbol ideal secret sharing}\label{sec:sss}

Theorem~\ref{thm:main} has the following consequence for ideal
secret-sharing schemes whose secret and active shares use four symbols. We
first recall the standard definitions.

\begin{definition}
Let $P$ be a finite participant set. An \emph{access structure} on $P$ is a
family $\mathcal A\subseteq 2^P$ such that $\varnothing\notin\mathcal A$ and
$A\in\mathcal A$, $A\subseteq B\subseteq P$ imply $B\in\mathcal A$. Sets in
$\mathcal A$ are \emph{qualified}, and the remaining sets are
\emph{unqualified}. A participant is \emph{active} if it belongs to a minimal
qualified set.
\end{definition}

\begin{definition}
A \emph{perfect secret-sharing scheme} for $\mathcal A$ is a random vector
$(S,(X_i)_{i\in P})$, with $H(S)>0$, such that, for every $A\subseteq P$,
\[
H(S\mid X_A)=
\begin{cases}
0,&A\in\mathcal A,\\
H(S),&A\notin\mathcal A.
\end{cases}
\]
It is \emph{ideal} if $H(X_i)=H(S)$ for every active participant $i$. It is
\emph{$q$-ideal} if, in addition, $S$ is uniform on a $q$-element set and
every active share takes values in a $q$-element set; inactive participants
may be assigned constant shares. An access structure is \emph{$q$-ideal} if
it admits a $q$-ideal scheme. When $q$ is a prime power, the scheme is
\emph{$\F_q$-linear} if the secret and the active shares are linear forms of
a uniformly distributed vector over a finite-dimensional $\F_q$-space.
\end{definition}

\begin{corollary}\label{cor:sss}
Every $4$-ideal access structure admits a $4$-ideal
$\F_4$-linear perfect secret-sharing scheme.
\end{corollary}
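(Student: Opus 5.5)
The plan is the standard Brickell--Davenport passage from ideal schemes to matroids, followed by Theorem~\ref{thm:main} and the usual linear construction from a representation. Let $(S,(X_i)_{i\in P})$ be a $4$-ideal scheme for $\mathcal A$. Discard the inactive participants, whose shares are constant and play no role, and let $P'$ be the set of active participants. Put $E=\{s\}\cup P'$ and $X_s=S$, and define
\[
r(A)=\frac{H(X_A)}{\log 4}\qquad(A\subseteq E).
\]
Brickell and Davenport showed that for an ideal perfect scheme this function is the rank function of a matroid $M$ on $E$ (when every participant is active, $s$ is not a loop or coloop in the trivial cases; coloops are handled by the degenerate structure). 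Moreover, the minimal qualified sets of $\mathcal A$ are exactly the sets $C\setminus\{s\}$ for the circuits $C$ of $M$ containing $s$. We will cite this result, in its entropic form, rather than reprove it. All variables take values in four-element alphabets: this holds for $S$ by assumption and for the active shares by $4$-ideality. Hence $M$ is $4$-entropic by definition.

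By Theorem~\ref{thm:main}, $M$ is representable over $\F_4$. Choose vectors $(v_e)_{e\in E}$ in $\F_4^k$ with $r(A)=\dim\operatorname{span}\{v_e:e\in A\}$.

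Next, let $U$ be uniform on $(\F_4^k)^*$, which we identify with $\F_4^k$ via row vectors. Set
\[
S'=U v_s,\qquad X_i'=U v_i\quad(i\in P'),
\]
and give inactive participants constant shares. The standard linear-algebra fact gives $H(X'_A)=r(A)\log 4$. We now check the three requirements. First, $v_s\neq0$ because $s$ is not a loop, since $H(S)>0$. Therefore $S'$ is uniform on $\F_4$, and likewise each active share, being a nonloop, is uniform on $\F_4$. Second, perfectness: $H(S'\mid X'_A)=\bigl(r(A\cup s)-r(A)\bigr)\log4$, and this lies in $\{0,\log4\}$. It equals $0$ exactly when $s\in\operatorname{cl}(A)$, that is, when $A$ contains $C\setminus s$ for some circuit $C\ni s$, which means $A\in\mathcal A$. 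Third, for sets $A$ containing inactive participants, those shares are constant and do not change the entropies, and membership in $\mathcal A$ is determined by the active part.

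The main obstacle is stating the Brickell--Davenport step correctly for the given definitions. This includes the treatment of inactive participants and the requirement that $r$ be integer-valued, which needs ideality for every active participant. I would invoke the entropic version directly: for an ideal scheme, every $H(X_A)$ is an integer multiple of $H(S)$ and the resulting function is a matroid rank function, as in Beimel--Livne. The remaining verification is the routine one above.
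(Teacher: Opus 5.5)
Your proposal is correct and follows the same route as the paper. You apply the (entropic) Brickell--Davenport theorem on the dealer and the active participants to obtain a $4$-entropic matroid, use Theorem~\ref{thm:main} to make it quaternary, and then use the standard random-linear-functional construction to get an $\F_4$-linear scheme with the same port. Your write-up is more explicit than the paper's sketch about three points: discarding inactive participants, checking that $s$ and the active elements are nonloops, and deriving perfectness from the closure condition $s\in\operatorname{cl}(A)$.
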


Indeed, the Brickell--Davenport theorem associates a $4$-ideal
perfect scheme with a matroid on the dealer and the active participants whose
port is the given access structure; its normalized joint entropies give the
rank function, so the matroid is $4$-entropic. Theorem~\ref{thm:main} makes
this matroid $\F_4$-representable, and the standard linear construction gives
an ideal $\F_4$-linear scheme for the same port~\cite{BrickellDavenport1991};
see also \cite[Section~5.3]{Matus1999}. This is an existence statement: the
original scheme need not be coordinatewise equivalent to a linear scheme.

\section{Independent verification}\label{sec:verify}

The proof above uses no exhaustive quasigroup enumeration. A self-contained
exact-verification archive is available on Zenodo at
\url{https://doi.org/10.5281/zenodo.22178786}.
Its one-command verifier
independently regenerates all $55{,}296$ labelled ternary quasigroups of order
four, reduces them to $2{,}304$ output-partition classes and twelve
input-isotopy orbits, and checks the final candidates by exact projection
cardinalities. It returns no representation of $P_6$, $(F_7^-)^*$, $P_8$, or
$P_8''$. Separate exact $\F_2$ scripts check the affine endpoints. These
computations use neither floating-point arithmetic nor randomized search and
are not used in any proof.

\section{\texorpdfstring{Conclusion}{Conclusion}}

Theorem~\ref{thm:main} shows that degree-four partition representability is
exactly representability over $\F_4$, so nonlinear four-symbol representations
define no larger matroid class. Consequently, every $4$-ideal access structure
admits an ideal $\F_4$-linear scheme. Together with the binary and ternary
rigidity results, this identifies alphabet size four as another small-domain
rigidity case, while the non-Pappus example at size nine shows that such
rigidity does not hold in general~\cite{SimonisAshikhmin1998}.

\section*{Statements and declarations}

\paragraph{Competing interests} The authors declare no competing interests.

\paragraph{Data and code availability} The exact verification package accompanying the manuscript is an independent check of the structural proof. No additional research dataset is required.

\paragraph{Use of generative AI} Generative-AI tools were used only for limited
mathematical discussion and language/editorial polishing. The authors
independently checked the mathematics and take full responsibility for the
content.

\bibliographystyle{elsarticle-num}
\bibliography{references}

\end{document}